\definecolor{viola}{rgb}{0.3,0,0.7}
\definecolor{lilla}{rgb}{0.8,0,0.8}
\definecolor{ciclamino}{rgb}{0.5,0,0.5}
\definecolor{blu}{rgb}{0,0,0.7}
\definecolor{verde}{rgb}{0,0.5,0.2}
\definecolor{rosso}{rgb}{0.8,0,0}
\def\piecol #1{{\color{verde}#1}}
\def\luca #1{{\color{blu}#1}}
\def\pier #1{{\color{rosso}#1}}
\def\ele#1{{\color{viola}#1}}
\def\pcol #1{{\color{rosso}#1}}
\newcommand\gius[2]{{\color{lilla}#1}} 
\def\pieluc #1{{\color{rosso}#1}}
\def\piecol #1{#1}
\def\luca #1{#1}
\def\pier #1{#1}
\def\ele #1{#1}
\def\pcol #1{#1}
\renewcommand\gius[2]{{#1}}
\def\pieluc #1{#1}
\theoremstyle{plain}
\newtheorem{thm}{Theorem}[section]
\newtheorem{lem}[thm]{Lemma}
\theoremstyle{definition}
\newtheorem{rmk}[thm]{Remark}
\def\En{\mathbb{N}}
\def\Ar{\mathbb{R}}
\def\Pi{\mathbb{P}}
\def\cL{\mathscr{L}}
\def\beq{\begin{equation}}
\def\eeq{\end{equation}}
\def\rarr{\rightarrow}
\def\l|{\left\|}
\def\r|{\right\|}
\def\L2{L^2(\Omega)}
\def\H1{H^1(\Omega)}
\def\rarrw{\rightharpoonup}
\def\weakstar{\stackrel{*}{\rightharpoonup}}
\def\embed{\hookrightarrow}
\begin{document}
\begin{center}
\piecol{%
{\huge\rm A doubly nonlinear Cahn--Hilliard system\\[0.2cm] 
with nonlinear viscosity\/\footnote{\piecol{{\bf 
Acknowledgments.}\quad\rm PC gratefully acknowledges some 
financial support from the MIUR-PRIN Grant \piecol{2015PA5MP7 ``Calculus of Variations''}, the GNAMPA (Gruppo Nazionale per l'Analisi Matematica, 
la Probabilit\`a e le loro Applicazioni) of INdAM (Istituto 
Nazionale di Alta Matematica) and the IMATI -- C.N.R. Pavia.}}}
\\[0.5cm]
{\large\sc Elena Bonetti}\\
{\normalsize e-mail: {\tt elena.bonetti@unimi.it}}\\[.25cm]
{\large\sc Pierluigi Colli}\\
{\normalsize e-mail: {\tt pierluigi.colli@unipv.it}}\\[.25cm]
{\large\sc Luca Scarpa}\\
{\normalsize e-mail: {\tt luca.scarpa.15@ucl.ac.uk}}\\[.25cm]
{\large\sc Giuseppe Tomassetti}\\
{\normalsize e-mail: {\tt giuseppe.tomassetti@uniroma3.it}}\\[.25cm]
$^{(1)}$
{\small Dipartimento di Matematica ``F.Enriques'', Universit\`a degli 
Studi di Milano}\\ 
{\small Via Saldini 50, 20133 Milano, Italy}\\[.2cm]
$^{(2)}$
{\small Dipartimento di Matematica ``F. Casorati'', Universit\`a di Pavia}\\
{\small Via Ferrata 1, 27100 Pavia, Italy}\\[.2cm]
$^{(3)}$
{\small Department of Mathematics, University College London}\\
{\small Gower Street, London WC1E 6BT, United Kingdom}\\[.2cm]
$^{(4)}$
{\small Dipartimento di Ingegneria - Sezione Ingegneria Civile}\\
{\small Universit\`a degli Studi ``Roma Tre'', Via Vito Volterra 62, Roma, Italy}
}

\end{center}
       
\begin{abstract}
\vskip-1cm
In this paper we discuss a family of viscous Cahn--Hilliard equations
with a non-smooth viscosity term. This system may be viewed as an approximation 
of a ``forward-backward'' parabolic equation. \ele{The resulting \pcol{problem} 
is highly nonlinear, coupling in the same equation two nonlinearities 
with the diffusion term}. In particular, we prove 
existence of solutions for the related initial and boundary value problem. 
\ele{Under suitable assumptions, we} also \ele{state uniqueness and} continuous dependence on data. 
\\[.5cm]
{\bf AMS Subject Classification:} 35G31, 35K52, 35D35, 74N25\\[.5cm]
{\bf Key words and phrases:} diffusion of species; Cahn--Hilliard equations;
viscosity; non-smooth regularization; nonlinearities; initial-boundary value problem;
existence of solutions; continuous dependence. 
\end{abstract}

\pagestyle{myheadings}
\newcommand\testopari{\sc \pier{A doubly nonlinear Cahn--Hilliard system 
with nonlinear viscosity}}
\newcommand\testodispari{\sc \pier{Bonetti -- Colli -- Scarpa -- Tomassetti}}
\markboth{\testodispari}{\testopari}


\thispagestyle{empty}

\section{Introduction}
\setcounter{equation}{0}
\label{intro}
 In this paper we are concerned with the following variant of the viscous Cahn--Hilliard system 
\begin{align}
  \label{eq1}
  \partial_t u - \Delta\mu = 0 \qquad&\text{in } \Omega\times(0,T)\,,\\
  \label{eq2}
  \mu \in \partial_t u + \beta(\partial_t u) - \Delta u +\psi'(u)+ g \qquad&\text{in } \Omega\times(0,T)\,,\\
  \label{boundary}
  \partial_{\bf n} u =0\,, \quad \mu=0 \qquad&\text{in } \partial\Omega\times(0,T)\,,\\
  \label{init}
  u(0)=u_0 \qquad&\text{in } \Omega\,\gius{,}{.}
\end{align}
which governs the evolution of \pcol{the} \emph{density} $u$ and \pcol{the} \emph{chemical potential} $\mu$ of a species diffusing in a space--time region $\Omega\times(0,T)$\ele{,} with $\Omega$ a domain of the three--dimensional physical space. Here $\psi$ is \pcol{a} possibly non--convex free energy density\pcol{,} $\beta:\mathbb R\to 2^{\mathbb R}$ is a maximal monotone graph \ele{such that $0\in\beta(0)$}, \pcol{and $g$ is a datum.}

The model we consider differs from the standard Cahn--Hilliard \pcol{one} in that the \pcol{usual} prescription $\mu=-\Delta u+\psi(u)$ for chemical potential is replaced by the inclusion \eqref{eq2}, whose right--hand side \pcol{also} depends on the time derivative $\partial_t u$, \ele{due to the presence of viscosity contributions: a linear and a nonlinear viscosity terms}, the latter \pcol{represented by the} maximal monotone graph $\beta$. 

\pcol{We point out that the usual boundary condition for $\mu$ that is 
considered for the Cahn--Hilliard system is the Neumann homogeneous boundary 
condition. \pieluc{To this concern, we mention the contribution \cite{MS} in which a system
similar to \eqref{eq1}--\eqref{init} has been treated in the case of Neumann homogeneous boundary conditions for $\mu$ (see the later Remark~\ref{pierrem}), while \cite{MZ} is involved with the analysis of an analogous system in which the viscosity term in the equation corresponding to \eqref{eq2} has the form
$\partial_t \alpha (u)$, with $\alpha$ maximal monotone graph.}
Here, also in view of the argumentation developed in \cite{BCT}, 
we follow a different approach by prescribing a {\it Dirichlet condition} 
for $\mu $ on the boundary. To this concern, let us underline that despite 
the second equality in \eqref{boundary}, in our approach we can treat the case of a {\it non-homogeneous} Dirichlet boundary condition for the chemical potential. Indeed, in this case it would suffice to consider a shifted chemical potential $\mu$ by subtracting the harmonic extension of the boundary datum and by incorporating it into the known term $g$ in \eqref{eq2}.}

\pcol{Concerning} the nonlinear viscosity \ele{contribution $\beta(\pcol{\partial_t u})$}, if it is removed, then we recover the viscous Cahn--Hilliard system proposed by Novick-Cohen in \cite{Novic1988viscous} and studied by Novick--Cohen and Pego in \cite{NovicP1991TAMS}. If, on the other hand, we remove from the right--hand side of \eqref{eq2} \ele{the diffusive contribution $-\Delta u$, which actually can be read as a space regularization}, we obtain a play--type hysteresis model with dynamical effects that has been proposed to model macroscopical hysteretic effects in diffusion processes\pcol{:} see for instance \cite{schweizer2012} for applications to flow in porous media and \cite{Tomas,BCT} for applications in solid--state hydrogen storage. \ele{In \pcol{particular}, in \cite{BCT} we state an existence and uniqueness result for a \pcol{system similar to \eqref{eq1}-\eqref{init} but} where we neglect the contribution of $-\Delta u$.} Other possibilities to model hysteresis associated to diffusion have been considered in \cite{botkinetal2016}, a paper \pcol{containing numerical results as well.}

\ele{The PDE system we propose in this paper aims} to model processes where phase separation due to diffusion is accompanied by hysteretic behavior. As a possible application of this model, we suggest hydrogen adsorption in solid--state storage devices, where the major manifestation of hysteresis is in the fact that the pressure that is needed to induce hydrogen adsorption is higher than the pressure needed to induce desorption~\cite{Latroce}.  


\section{Thermodynamical consistency}
The starting point of Cahn--Hilliard type models is the \emph{diffusant--balance equation}
\begin{equation}\label{eq:11}
  \begin{aligned}
  &\partial_t u + \operatorname{div}{\pcol{\boldsymbol{h}}}=0
\end{aligned}
\end{equation}
where $u$ is the mass density and $\boldsymbol h$ is the mass flux, related to the chemical potential $\mu$ through \emph{Fick's law}
\begin{equation}\label{eq:2}
  \boldsymbol h=-M\nabla\mu,
\end{equation}
with $M$ a positive scalar mobility. In the standard Cahn--Hilliard model the system \eqref{eq:11}--\eqref{eq:2} is \ele{complemented} by the equation
\begin{equation}\label{eq:6}
\mu=\psi'(u)-\alpha\Delta u,
\end{equation}
whose right--hand side is the variational derivative of the \emph{free energy contained in $\Omega$}:
\begin{equation}\label{eq:18}
  \mathcal F(u)=\int_\Omega \pcol{\Big(} \psi(u)+\frac \alpha 2|\nabla u|^2 \pcol{\Big)}.
\end{equation}
\pcol{This} section is dedicated to showing that the replacement of \eqref{eq:6} with the 
\pcol{differential inclusion}
\begin{equation}\label{eq:21}
  \mu\in{\pcol{\sigma}}\partial_t u+\beta(\partial_t u)-\alpha\Delta u+\psi'(u)+g
\end{equation}
leads to a thermodynamically consistent model, provided that ${\pcol{\sigma}}\ge 0$ and $\beta:\mathbb R\to 2^{\mathbb R}$ is a maximal monotone graph \pcol{with $0\in \beta(0)$}. \gius{}{It is quite instructive to notice that the system that results from \eqref{eq:22} and \eqref{eq:21} can be written formally as a gradient flow
\begin{equation}
  \partial\mathcal R(\partial_t u)+D_u\mathcal F(u)\in 0,
\end{equation}
of the free energy \eqref{eq:18} with respect to the dissipation potential
\begin{equation}
  \mathcal R(v)=\int_\Omega \frac 1 {2 M}\nabla (\Delta^{-1}w)\cdot \nabla(\Delta^{-1} w)+\frac {\pcol{\sigma}}2 {v^2}+B(v),
\end{equation}
where $B(\cdot)$ is the (proper, lower semicontinuous and convex) potential of the maximal monotone graph $\beta(\cdot)$. However, we prefer to interpret \eqref{eq2} a constitutive prescription  which finds its justification within the framework proposed by Gurtin in \cite{Gurtin96} to derive thermodynamically--consistent models of Allen--Cahn and Cahn--Hilliard type.} 

As a start, we recall that, within Gurtin's framework, the set--valued function
\begin{equation}\label{eq:14}
  \mathcal I(R)=-\int_{\partial R} \mu\,\boldsymbol h\cdot\boldsymbol n_R,\qquad R\subset\Omega,
\end{equation}
is the inflow of chemical energy due to mass transport into any \emph{fixed} smooth subregion $R\subset\Omega$ (whose outward unit normal at the boundary we denote by $\boldsymbol n_R$) of the domain where the diffusion process takes place. 

Besides transport of the diffusant, there are other mechanisms by  which $R$ can exchange energy with its environment. Central to Gurtin's approach is the presumption that energy--exchange processes other than mass transport take place through \emph{expenditure of power} associated with time variations of $u$, the field representing the degrees of freedom of the physical system that is being modeled; in particular, Gurtin postulates that there exists a vector field $\boldsymbol s$ and scalar field $g$ (both defined on $\Omega$) such that the power expended by the environment on an arbitrary part $R$ admits the following representation:
\begin{equation}\label{eq:10}
  \mathcal W(R)=\int_{\partial R} (\boldsymbol s\cdot\boldsymbol n_R)\,\partial_t u+\int_R g\, \partial_t u.
\end{equation}
By way of analogy with continuum mechanics, the scalar field $g$ is interpreted as a \emph{microscopic force} that external agents exert on the part $R$, whereas the vector field $\boldsymbol s$ is interpreted as a \emph{microscopic stress}. Along with this interpretation goes the postulate that the following \emph{microforce balance} holds in $\Omega$:
\begin{equation}\label{eq:7}
  \operatorname{div}\boldsymbol s+f+g=0,
\end{equation}
where $g$ is an internal microforce accounting for the interaction of the medium with the diffusant. 

Inasmuch as the diffusant--balance equation \eqref{eq:11} is the antecedent of the elliptic equation
\begin{equation}\label{eq:22}
  \partial_t u \pcol{{}-{}} M\Delta\mu=0,
\end{equation}
which is obtained from  \eqref{eq:11} through the constitutive equation \eqref{eq:2} for the flux $\boldsymbol h$, the microscopic force balance \eqref{eq:7} is the antecedent of the inclusion \eqref{eq:21} needed to close \eqref{eq:22}. This inclusion is obtained by providing suitable \emph{constitutive prescriptions} for $\boldsymbol s$, $f$, $g$. We note on passing that when the microforce balance \eqref{eq:7} is combined with the constitutive prescriptions 
\begin{equation}\label{eq:19}
  \boldsymbol s=\alpha\nabla u,\qquad g=0,
\end{equation}
along with 
\begin{equation}\label{eq:1}
f=\mu-\psi'(u),
\end{equation}
then the standard prescription \eqref{eq:6} for the chemical potential is recovered. 

When willing to generalize \eqref{eq:6}, a procedure to filter out thermodynamical consistent prescriptions is based on the requirement that all possible evolution processes (which in principle can be driven by tuning the external microscopic force $g$) be consistent with the Second Law of Thermodynamics. In isothermal conditions, the Second Law reduces to a dissipation principle which dictates that the time derivative of the free energy contained in any part $R$ of the domain $\Omega$ must not exceed the rate at which energy, both in the form of chemical inflow and microscopic power expenditure, is supplied to that part, or equivalently that the dissipation rate within $R$ be non negative: 
\begin{equation}\label{eq:15}
 \mathcal D(R):= \mathcal I(R)+\mathcal W(R)-\frac{d}{dt}\int_R \pcol{\Big(}\psi(u)+\frac \alpha 2|\nabla u|^2\pcol{\Big)}\ge 0.
\end{equation}
To see how this requirement imposes restrictions on the microscopic stress $\boldsymbol s$ and on the microscopic force $f$, we begin by noting that the expression of the inflow of chemical energy $\mathcal I(R)$ and the expended power $\mathcal W(R)$ \pcol{expressed} in \eqref{eq:14} and \eqref{eq:10} can be given the form of volume integrals by making use of the mass balance equation \eqref{eq:11} and the microforce balance equation \eqref{eq:7}, which yield
\begin{equation}\label{eq:16}
  \mathcal I(R)=\int_R \mu\, \partial_t u-\boldsymbol h\cdot\nabla\mu,\qquad
  \mathcal W(R)=\int_{R} \boldsymbol s\cdot\nabla\partial_t u- f \partial_t u.
\end{equation}
Then, the combination of \eqref{eq:15} with \eqref{eq:16} and \eqref{eq:18}, followed by a standard localization argument\pcol{,} shows that dissipation rate \eqref{eq:15} is a measure, and thermodynamical consistency is ensured provided that the density $\delta$ of this measure is non-negative: 
\begin{equation}\label{eq:3}
\delta=(\boldsymbol s-\alpha\nabla\partial_tu)\cdot\nabla\partial_t u+(\mu-f-\psi'(u))\partial_tu-\pcol{\boldsymbol{h}}\cdot\nabla\mu\ge 0.
\end{equation}
It is easy to see that the choices \eqref{eq:2} and \eqref{eq:19} together with \eqref{eq:1} guarantee that \eqref{eq:3} is never violated during any evolution process, since they entail that the dissipation--rate density is $\delta=M|\nabla\mu|^2$, which is non--negative  if the mobility $M$ is positive.

The choice that leads to \eqref{eq:21}, still consistent with \eqref{eq:3}, is to keep \eqref{eq:19} and \eqref{eq:2}, but replace the equation \eqref{eq:1} with the following inclusion:
\begin{equation}\label{eq:4}
 f\in\mu-\psi'(u)-{\pcol{\sigma}}\partial_t u-\beta(\partial_t u),
\end{equation}
where ${\pcol{\sigma}}\ge 0$ and $\beta:\mathbb R\to 2^{\mathbb R}$ is a maximal monotone graph. By combining \eqref{eq:4} with \eqref{eq:19} and \eqref{eq:7} we obtain \eqref{eq:21}. In this case, the dissipation--rate density is
\begin{equation}\label{eq:20}
  \delta=M|\nabla\mu|^2+({\pcol{\sigma}}\partial_t u)^2+\beta(\partial_tu)\partial_t u\pcol{{}\geq 0.}
\end{equation}
\ele{The last inequality  exploits the monotonicity of $\beta$ and the fact that $0\in\beta(0)$. For the sake of completeness, let us quote also the theory introduced by Fr\'emond \cite{fre}
in the framework of phase transitions, where the systems are recovered in a variational setting \pier{by} introducing a generalization of the principle of virtual powers including microscopic forces and motions. In this setting thermodynamical consistency is ensured once the dissipation functional is a pseudo-potential of dissipation \`a la Moreau, i.e. a convex, \pcol{lower semicontinuous}, non-negative functional, which takes value zero for zero dissipation.}
\ele{%
\begin{rmk}\label{rem_flux}
{Let us point out that} the Cahn--Hilliard system can be formally rewritten as a gradient flow (in a suitable variational framework)
\begin{equation*}
  D\mathcal H(\partial_tu)+D\mathcal F(u)=0,
\end{equation*}
with respect to the free energy \eqref{eq:18} and the dissipation potential
\begin{equation*}
  \mathcal H(v)=\frac 1 {2M}\int_\Omega \nabla(\gius{G}{\Delta^{-1}}v)\cdot\nabla(\gius{G}{\Delta^{-1}}v)\gius{,}{.}
\end{equation*}
\gius{where}{Here} \gius{$G=-\Delta^{-1}$}{} denotes the \gius{inverse}{solution operator} of the \gius{negative}{} Laplacian with homogeneous Dirichlet boundary conditions \gius{(see the definition at the beginning of Section 4 below)}{}.
\gius{It is instructive to notice that, analogously,}{ nalogously, it is quite instructive to notice that} the system that results from \eqref{eq:22} and \eqref{eq:21} can be written formally as a gradient flow
\begin{equation*}
  \partial\mathcal R(\partial_t u)+D_u\mathcal F(u)\in 0,
\end{equation*}
with the choice of  the free energy \eqref{eq:18}  and the dissipation potential
\begin{equation*}
  \mathcal R(v)=\gius{\mathcal H(v)}{\int_\Omega \frac 1 {2 M}\nabla (\gius{G}{\Delta^{-1}}w)\cdot \nabla(\gius{G}{\Delta^{-1}} w)}+\gius{\int_\Omega
  \pcol{\Big(}
  \frac {\pcol{\sigma}}2 \, {v^2}+\widehat{\beta}(v)}{}\pcol{\Big)},
\end{equation*}
where \gius{$\widehat{\beta}$}{} is the (proper, lower semicontinuous and convex) potential of the maximal monotone graph \gius{$\beta$}{$\beta(\cdot)$}. The variational structure we have put in evidence \pcol{leads to an energy type estimate for the solutions of the system, which is recovered in our mathematical analysis.} However, we \pcol{underlined the interpretation of} \eqref{eq2} as a constitutive prescription  which finds its justification within the framework proposed by Gurtin in \cite{Gurtin96} to derive thermodynamically--consistent models of Allen--Cahn and Cahn--Hilliard type.
\end{rmk}%
}


\section{Setting and main results}

\setcounter{equation}{0}
\label{results}

In this section, we \ele{first} describe the general setting of the work and \ele{our} notation. \ele{Then,}
 the problem that we are dealing with and the main results are stated.

Throughout the paper, $\Omega$ \ele{denotes} a smooth bounded domain in $\Ar^3$ \ele{with  boundary} $\Gamma$
and $T>0$ is a fixed final time; for any $t\in(0,T]$ we use the notation
\[
  Q_t:=\Omega\times(0,t)\,, \quad \Sigma_t:=\Gamma\times(0,t)\,, \quad Q:=Q_T\,, \quad \Sigma:=\Sigma_T\ele{.}
\]
Moreover, we introduce the spaces
\[
  H:=L^2(\Omega)\,, \quad V:=H^1(\Omega)\,,
\]
endowed with their usual norms $\l|\cdot\r|_H$ and $\l|\cdot\r|_V$, respectively, and we identify $H$ with its dual,
so that $(V, H, V^*)$ is a Hilbert triplet. \piecol{We also introduce the subspace 
$V_0 := H^1_0 (\Omega)$ of $V$} \pier{and the space 
$$ W= \left\{y\in H^2(\Omega): \ \partial_{\bf n}y=0 \, \  \text{\pier{a.e. on} } \Gamma\right\}. 
$$
The} symbol $\left<\cdot, \cdot\right>$
is used to denote the duality pairing between $V^*$ and $V$, while $(\cdot, \cdot)$ is the usual scalar product of $H$.

Let us now make some rigorous assumptions on the data. We assume that
\begin{gather}
  \label{psi1}
  \psi:(a, b)\rarr\Ar\,, \quad -\infty\leq a<b\leq+\infty\,,\\
  \label{psi2}
  \psi\in C^2(a,b)\,,\\
  \label{psi3}
  \psi(r)\geq0 \quad\forall\, r\in(a,b)\,,\\
  \label{psi4}
  \lim_{r\rarr a^+}\psi'(r)=-\infty\,, \quad \lim_{r\rarr b^-}\psi'(r)=+\infty\,,\\
  \label{psi5}
  \psi''(r)\geq -K \quad\forall\,r\in(a,b)
\end{gather}
for a positive constant $K$. Furthermore, as far as $\beta$ is concerned, we assume that
\beq
  \label{beta}
  \beta:\Ar\rarr2^{\Ar} \quad\text{is maximal monotone}\,, \quad \beta(0)\ni0\,,
\eeq
so that \pier{we can introduce} the proper, convex, lower semicontinuous function
\beq
  \label{beta_hat}
  \pier{\widehat{\beta}}:\Ar\rarr[0,+\infty] \quad\text{such that}\quad\pier{\widehat{\beta}}(0)=0\,, \quad\partial\pier{\widehat{\beta}}=\beta\,,
\eeq
\ele{where the subdifferential is intended in the sense of convex analysis}.
Finally, we make the following hypotheses on $g$ and the intial datum:
\begin{gather}
  \label{g}
  g\in H^1(0,T; H)\cap L^2(0,T; V)\,,\\[0.2cm]
  u_0 \in \pier{W} ,  \, \ \psi'(u_0)\in H , \ \pier{\hbox{ and there exist }  a_0>a, \ \, b_0 < b\,} 
  \nonumber \\
\hskip3cm \pier{\hbox{ such that }\,  a_0\leq u_0 (x) \leq b_0 \quad \hbox{for all }\,  x\in\Omega}\,. \label{u_0}
\end{gather}

\begin{rmk}
\ele{Let us note that, for the sake of clarity,} in \eqref{u_0} we are \pier{asking that the composition of $\psi'$ 
with $u_0$ belongs to the space $H$ \ele{even if, actually,} this is ensured by the other conditions in 
\eqref{u_0}. \ele{Indeed, the} Sobolev embedding theorems,  $H^2(\Omega) \subset C^0(\overline{\Omega}) $
and the bounds $a_0\leq u_0\leq b_0 $ imply that $\psi(u_0)$, $\psi'(u_0)$, $\psi''(u_0)$, all lie in $L^\infty (\Omega)$.}
\end{rmk}
\ele{Now, it remains} to \ele{introduce} the initial values \pier{$\mu_0\in V_0 \cap H^2(\Omega)$ and $u_0'\in H$}. To this end,
the natural \pier{approach} is to require that $(\mu_0, u_0, u_0')$ satisfy the elliptic system
induced by \eqref{eq1}--\eqref{eq2} at the initial time:
\beq
  \label{system_init}
  \begin{cases}
  u_0'-\Delta\mu_0=0 \quad \pier{\hbox{a.e. in }\,\Omega}, \\[0.1cm]
  \mu_0\in u_0'+\beta(u_0')-\Delta u_0+\psi'(u_0)+g(0) \quad\pier{\hbox{a.e. in }\,\Omega}.
  \end{cases}
\eeq
Setting $z_0:=-\Delta u_0 + \psi'(u_0) + g(0)$, \ele{from \eqref{system_init} it follows}
$u_0'\in(I+\beta)^{-1}(\mu_0-z_0)$ and
$$(I+\beta)^{-1}(\mu_0-z_0)-\Delta\mu_0{\ele \ni}0 \quad\pier{\hbox{a.e. in }\,\Omega.}$$ 
Since $z_0\in H$ thanks to \eqref{g}--\eqref{u_0} and
$(I+\beta)^{-1}$ is \pier{monotone and} Lipschitz continuous, the last \ele{inclusion is actually an equation and it} admits a solution \pier{$\mu_0\in V_0\cap H^2(\Omega)$,}
which is a posteriori unique \pier{due to the strong monotonicity of the operator $-\Delta$ with domain $V_0 \cap H^2(\Omega)$.} It is clear that\pier{, by} defining $u_0':=(I-\beta)^{-1}(\mu_0-z_0)\in H$, then \pier{the pair 
$(\mu_0, u_0')$ solves} the system \eqref{system_init}.

We are now ready to state the main results of the work.
\begin{thm}
  \label{thm1}
  In the current setting, assume that
  \beq
    \label{growth_psi} \luca{(a,b) =\Ar\,, \quad 
    \exists\, M>0: \quad |\psi''(r)|\leq M\left(1+|r|^5\right) \quad\forall\, r\in 
    \Ar} \,.
  \eeq
  Then there exists a triplet $(u, \mu, \xi)$ such that  
\begin{gather}
    \label{u}
    u\in W^{1,\infty}(0,T; H)\cap H^1(0,T; V)\cap L^\infty(0,T; \pier{W})\\
    \label{mu}
    \mu\in L^\infty(0,T; V_0\cap H^2(\Omega))\cap L^2(0,T; H^3(\Omega))\,,\\
    \label{xi_psi}
   \xi \in L^\infty(0,T; H)\,, \quad \ \psi'(u)\in \pier{L^\infty(0,T; H)}\,,\\
    \label{incl}
    \xi\in\beta(\partial_t u) \quad\text{a.e.~in } Q\,,\\
    \label{1}
    \partial_t u(t)-\Delta \mu(t) = 0 \quad\text{for a.e.~}t\in(0,T)\,,\\
    \label{2}
    \mu(t)=\partial_t u(t)+\xi(t)-\Delta u(t) +\psi'(u(t)) + g(t)\quad\text{for a.e.~}t\in(0,T)\,,\\
    \label{3}
    u(0)=u_0.
  \end{gather}
\end{thm}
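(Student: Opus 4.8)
The plan is to combine a double approximation with uniform a priori estimates and a compactness--monotonicity passage to the limit. First I would regularize the only non-smooth object in the problem: for $\lambda\in(0,1)$ replace $\beta$ by its Yosida approximation $\beta_\lambda$, which is $\lambda^{-1}$-Lipschitz, monotone, satisfies $\beta_\lambda(0)=0$, and has a smooth convex potential $\widehat\beta_\lambda$. The approximating problem is \eqref{eq1}--\eqref{eq2} with $\beta_\lambda$ in place of $\beta$, together with \eqref{boundary}--\eqref{init}. Since the chemical potential carries a Dirichlet condition, one has $\mu_\lambda=-\mathcal N(\partial_t u_\lambda)$ with $\mathcal N=(-\Delta)^{-1}$ under homogeneous Dirichlet data, so that \eqref{eq1}--\eqref{eq2} reduces to a single (gradient-flow type) equation for $u_\lambda$; I would solve it by a Faedo--Galerkin scheme built on the eigenfunctions of $-\Delta$ with homogeneous Neumann conditions (the nonlocal term $\mathcal N(\partial_t u_\lambda)$ being handled by projecting it onto the discrete space). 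Local solvability of the finite-dimensional system follows from Cauchy--Lipschitz via the local Lipschitz continuity of $\psi'$, global solvability from the discrete energy estimate, and the passage to the Galerkin limit from Aubin--Lions compactness together with the growth bound \eqref{growth_psi} (all the a priori estimates below being available, with the natural constants, already at the Galerkin level).

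The core of the argument consists of a priori estimates uniform in $\lambda$. The first is the energy estimate: testing \eqref{eq2} by $\partial_t u_\lambda$ and using \eqref{eq1} to rewrite $(\mu_\lambda,\partial_t u_\lambda)=(\mu_\lambda,\Delta\mu_\lambda)=-\|\nabla\mu_\lambda\|_H^2$, together with $\psi\ge0$ and $(\beta_\lambda(\partial_t u_\lambda),\partial_t u_\lambda)\ge0$, yields after Gronwall that $u_\lambda$ is bounded in $L^\infty(0,T;V)\cap H^1(0,T;H)$, that $\int_Q\beta_\lambda(\partial_t u_\lambda)\,\partial_t u_\lambda$ is bounded, and (by elliptic regularity on \eqref{eq1}) that $\mu_\lambda$ is bounded in $L^2(0,T;H^2\cap V_0)$. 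The decisive estimate is obtained by differentiating the system in time (rigorously, via time-increment quotients), testing the differentiated version of \eqref{eq2} by $\partial_t u_\lambda$, using \eqref{eq1} once more to handle $(\partial_t\mu_\lambda,\partial_t u_\lambda)=-\tfrac12\tfrac{d}{dt}\|\nabla\mu_\lambda\|_H^2$, the monotonicity of $\beta_\lambda$ (which contributes a nonnegative term of the form $\tfrac{d}{dt}\int_\Omega\gamma_\lambda(\partial_t u_\lambda)$), and --- crucially --- only the \emph{lower} bound \eqref{psi5}, through $(\psi''(u_\lambda)\partial_t u_\lambda,\partial_t u_\lambda)\ge -K\|\partial_t u_\lambda\|_H^2$; here the regularity of $u_0$ and of the compatible initial pair $(\mu_0,u_0')$ produced by \eqref{system_init} is what controls the data at $t=0$. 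A Gronwall argument then gives that $\partial_t u_\lambda$ is bounded in $L^\infty(0,T;H)\cap L^2(0,T;V)$ and $\nabla\mu_\lambda$ in $L^\infty(0,T;H)$, hence, again by elliptic regularity on \eqref{eq1}, $\mu_\lambda$ is bounded in $L^\infty(0,T;H^2\cap V_0)\cap L^2(0,T;H^3)$.

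To reach the remaining regularity $u\in L^\infty(0,T;W)$ and $\psi'(u),\xi\in L^\infty(0,T;H)$ I would test \eqref{eq2} by $-\Delta u_\lambda$ and by $\beta_\lambda(\partial_t u_\lambda)$ and combine the two identities, the $\psi'$-contribution being again handled by \eqref{psi5} in conjunction with the growth bound \eqref{growth_psi}; it is precisely here that the quintic exponent is borderline in dimension three, so that a bound on $u_\lambda$ in $L^\infty(0,T;W)\hookrightarrow L^\infty(Q)$ is consistent with the nonlinear terms staying bounded in $L^\infty(0,T;H)$. I expect this combined estimate --- which has to control the two nonlinearities $\beta_\lambda(\partial_t u_\lambda)$ and $\psi'(u_\lambda)$ appearing in the \emph{same} equation together with the diffusion $-\Delta u$ --- to be the main technical obstacle, since the naive tests produce cross terms of indefinite sign or Gronwall inequalities that threaten finite-time blow-up, and one must exploit the viscous term $\partial_t u$ and the precise structure of \eqref{eq2} to close the loop.

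Finally I would pass to the limit $\lambda\to0$. Weak and weak-$*$ compactness furnish, along a subsequence, limits $u$, $\mu$, $\xi$ of $u_\lambda$, $\mu_\lambda$, $\xi_\lambda:=\beta_\lambda(\partial_t u_\lambda)$ in the spaces dictated by the bounds; from the uniform bounds on $u_\lambda$ in $L^\infty(0,T;W)$ and in $W^{1,\infty}(0,T;H)$ the Aubin--Lions--Simon lemma yields $u_\lambda\to u$ strongly in $C([0,T];H^{2-\eps}(\Omega))\hookrightarrow C([0,T];C(\overline\Omega))$ for small $\eps>0$, whence $\psi'(u_\lambda)\to\psi'(u)$ uniformly on $\overline Q$ and $u(0)=u_0$. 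Passing to the limit in the approximating system gives \eqref{1}--\eqref{2} with $\xi=\mu-\partial_t u+\Delta u-\psi'(u)-g$; to prove the inclusion \eqref{incl}, i.e.\ $\xi\in\beta(\partial_t u)$ a.e.\ in $Q$, I would use the standard maximal-monotonicity criterion, which here reduces to $\limsup_{\lambda}\int_Q\xi_\lambda\,\partial_t u_\lambda\le\int_Q\xi\,\partial_t u$. This last inequality follows by rewriting $\int_Q\xi_\lambda\,\partial_t u_\lambda$ from \eqref{eq2} and using $\int_Q\mu_\lambda\,\partial_t u_\lambda=-\|\nabla\mu_\lambda\|_{L^2(Q)}^2$, $\int_Q\Delta u_\lambda\,\partial_t u_\lambda=\tfrac12\|\nabla u_0\|_H^2-\tfrac12\|\nabla u_\lambda(T)\|_H^2$ and $\int_Q\psi'(u_\lambda)\,\partial_t u_\lambda=\int_\Omega\psi(u_\lambda(T))-\int_\Omega\psi(u_0)$, together with weak lower semicontinuity of norms and the strong convergence $\psi(u_\lambda(T))\to\psi(u(T))$ in $L^1(\Omega)$ (ensured by the uniform $L^\infty(0,T;W)$ bound, which makes $\psi(u_\lambda(T))$ equi-bounded). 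Collecting the convergences establishes \eqref{u}--\eqref{3}.
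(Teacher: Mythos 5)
Your overall architecture coincides with the paper's: Yosida regularization of $\beta$, reduction of \eqref{eq1} and the Dirichlet condition to $\mu_\lambda=-G\partial_t u_\lambda$, an energy estimate, a time-differentiated estimate made rigorous by difference quotients and by the compatible initial pair from \eqref{system_init}, and the final identification of $\xi$ through a $\limsup$/maximal-monotonicity argument. Your way of solving the approximate problem (Galerkin with $\psi'$ kept un-regularized) differs from the paper's (truncation $T_\lambda$, Yosida approximation $\gamma_\lambda$ of $\gamma=\psi'+K\,\cdot$, Schauder fixed point on top of the Colli--Visintin result for doubly nonlinear equations), and it is an acceptable alternative in the setting of Theorem~\ref{thm1}, modulo details on inverting the projected operator $I+G+\beta_\lambda$ in the finite-dimensional system.

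The genuine gap is in the step where \eqref{growth_psi} must actually be used, i.e.\ the bound of $u_\lambda$ in $L^\infty(0,T;W)$ and of $\psi'(u_\lambda)$, $\beta_\lambda(\partial_t u_\lambda)$ in $L^\infty(0,T;H)$ (needed for \eqref{u} and \eqref{xi_psi}). Testing \eqref{eq2_approx} by $-\Delta u_\lambda$ and by $\beta_\lambda(\partial_t u_\lambda)$, as you propose, cannot close: adding the two identities only bounds $\|-\Delta u_\lambda+\beta_\lambda(\partial_t u_\lambda)\|_H$ by $\|\mu_\lambda\|_H+\|\partial_t u_\lambda\|_H+\|\psi'(u_\lambda)\|_H+\|g\|_H$, and under \eqref{growth_psi} the term $\|\psi'(u_\lambda)\|_H$ would require $u_\lambda\in L^{12}(\Omega)$, whereas the available bound $u_\lambda\in L^\infty(0,T;V)\embed L^\infty(0,T;L^6(\Omega))$ gives only $\psi'(u_\lambda)\in L^\infty(0,T;L^1(\Omega))$; moreover the cross term $\int\psi'(u_\lambda)\beta_\lambda(\partial_t u_\lambda)$ has no sign and $\beta_\lambda(\partial_t u_\lambda)$ has no a priori bound in this setting (it is recovered only a posteriori by comparison). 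You explicitly acknowledge that you do not know how to close this loop, but this is precisely the heart of the proof. The paper's device is different: take the gradient of \eqref{eq2_approx} tested by $\nabla\partial_t u_\lambda$ and add \eqref{eq2_approx} tested by $\partial_t\gamma_\lambda(u_\lambda)$; then the elliptic part organizes into $\tfrac12\tfrac{d}{dt}\int_\Omega|-\Delta u_\lambda+\gamma_\lambda(u_\lambda)|^2$, the $\beta_\lambda$-contributions $\int\beta_\lambda'(\partial_t u_\lambda)|\nabla\partial_t u_\lambda|^2$ and $\int\gamma_\lambda'(u_\lambda)\,\partial_t u_\lambda\,\beta_\lambda(\partial_t u_\lambda)$ are nonnegative, and the only delicate term $\int_{Q_t}\mu_\lambda\,\partial_t\gamma_\lambda(u_\lambda)$ is estimated by the H\"older triple $(\infty,6/5,6)$, using the uniform bound $|\gamma_\lambda'(r)|\le C(1+|r|^5)$ of Lemma~\ref{lemma5}, the bound $\mu_\lambda\in L^\infty(0,T;H^2(\Omega))\embed L^\infty(Q)$ from the second estimate, and $\partial_t u_\lambda\in L^2(0,T;V)\embed L^2(0,T;L^6(\Omega))$; this is exactly where the quintic exponent is spent. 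One then separates $\Delta u_\lambda$ and $\gamma_\lambda(u_\lambda)$ by monotonicity and bounds $\beta_\lambda(\partial_t u_\lambda)$ by comparison in \eqref{eq2_approx}. Without this (or an equivalent) mechanism, your argument stops before \eqref{u} and \eqref{xi_psi} are established, so the proof is incomplete; the final limit passage you outline is fine once these uniform bounds are in hand.
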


\begin{rmk}
\label{pier-rem}
\pier{Note that in the above formulation the boundary conditions \eqref{boundary} for $u$ and $\mu$  
are actually hidden in the regularity properties \eqref{u} and \eqref{mu},  
due to the definitions of the spaces $W$ and $V_0$. We also point out that, in the light of  the compact embedding $W\embed  C^0(\, \pier{\overline{\Omega}} \, )$ and, e.g., \cite[Cor.~4, p.~85]{simon}, $u $ is in 
$C^0([0,T]; C^0(\, \pier{\overline{\Omega}} \, ))$, then continuous everywhere in 
$\overline{Q}$; this fact,  along with \eqref{psi1}, \eqref{psi2} and $(a,b)= \Ar $, entails that
$\psi'(u)\in C^0(\,\pier{\overline{Q}} \,).$}
\end{rmk}

\begin{thm}
  \label{thm1bis} 
  In the current setting, assume that
  \beq
   \label{beta_sublinear}
   \piecol{D(\beta) = \Ar\,,} \quad
    \exists\, M>0: \quad |s|\leq M\left(1+ |r|\right) \quad\forall r\in\Ar\,,\; \forall\, s\in\beta(r)\,.
  \eeq
  Then there exists a triplet $(u, \mu, \xi)$ \pier{satisfying \eqref{u}--\eqref{3}.}
\end{thm}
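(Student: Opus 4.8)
The plan is to prove Theorem~\ref{thm1bis} by a Faedo--Galerkin-type approximation combined with a regularization of the graph $\beta$, following the scheme already invoked for Theorem~\ref{thm1} but exploiting the sublinear growth \eqref{beta_sublinear} in place of the polynomial control on $\psi''$. First I would replace $\beta$ by its Yosida approximation $\beta_\lambda$ (whose potential $\widehat\beta_\lambda$ inherits nonnegativity and $\widehat\beta_\lambda(0)=0$), and -- since now $\psi''$ is not assumed to have controlled growth -- also truncate $\psi$ outside a large interval so that $\psi'$ becomes globally Lipschitz, obtaining a family $\psi_n$ with $\psi_n''\ge-K$; the bound $a_0\le u_0\le b_0$ together with \eqref{u_0} keeps $\psi_n'(u_0)$ uniformly bounded in $H$. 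On each finite-dimensional subspace (spanned by eigenfunctions of $-\Delta$ with the homogeneous Neumann boundary condition for $u$ and the Dirichlet one for $\mu$, so that the boundary conditions \eqref{boundary} are built in) the ODE system is solved by Cauchy--Lipschitz, and the initial data $\mu_0$, $u_0'$ constructed in the excerpt provide compatible starting values.

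Next I would derive the a priori estimates, uniform in the Galerkin index and in $\lambda$, $n$. Testing \eqref{2} by $\partial_t u$ and \eqref{1} by $\mu$ (using $-\Delta\mu$ as a multiplier, i.e. working with the Dirichlet Laplacian $\cL$) and adding yields the basic energy identity: the dissipation terms $\|\partial_t u\|_H^2$, $\int_\Omega\beta_\lambda(\partial_t u)\partial_t u$ and $\|\nabla\mu\|_H^2$ are controlled by $\frac{d}{dt}\big(\tfrac12\|\nabla u\|_H^2+\int_\Omega\psi_n(u)\big)$ plus a term $(g,\partial_t u)$ that is absorbed by Young's inequality and the Gronwall lemma (the $-K$ lower bound on $\psi_n''$ is handled in the usual Cahn--Hilliard fashion). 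This gives $u$ bounded in $H^1(0,T;V)\cap L^\infty(0,T;V)$, $\psi_n(u)$ in $L^\infty(0,T;L^1)$, and $\mu$ in $L^2(0,T;V_0)$. A second estimate -- differentiating the system formally in time, or equivalently testing the time-differentiated equations by $\partial_t u$ and $\partial_t\mu$, using $u_0\in W$ and $\psi_n'(u_0)\in H$ -- upgrades $u$ to $W^{1,\infty}(0,T;H)$ and, via elliptic regularity for $-\Delta u=\mu-\partial_t u-\xi_\lambda-\psi_n'(u)-g$, to $L^\infty(0,T;W)$; this is where the sublinear bound \eqref{beta_sublinear} is essential, since it makes $\|\xi_\lambda\|_H=\|\beta_\lambda(\partial_t u)\|_H\le M(1+\|\partial_t u\|_H)$ uniformly bounded, and then $\psi_n'(u)$ is bounded in $L^\infty(0,T;H)$ by comparison and $\mu$ lands in $L^\infty(0,T;V_0\cap H^2)$ and $L^2(0,T;H^3)$ by further elliptic regularity applied to $-\Delta\mu=-\partial_t u$.

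Then I would pass to the limit, first in the Galerkin index and then in $\lambda\to0$ and $n\to\infty$ simultaneously (or successively, with $\lambda=\lambda(n)$). Weak-$*$ compactness gives the claimed regularity \eqref{u}--\eqref{xi_psi}; the Aubin--Lions lemma gives strong convergence of $u$ in $C^0([0,T];H)$ and of $\partial_t u$ in, say, $L^2(Q)$, which -- combined with the standard maximal-monotone-graph closure argument ($\xi_\lambda\rightharpoonup\xi$ in $L^2(Q)$, $\beta_\lambda(\partial_t u)\to$ something with $\int\xi_\lambda\partial_t u\to\int\xi\partial_t u$) -- yields the inclusion \eqref{incl}. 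The strong convergence of $u$ also identifies the limit of $\psi_n'(u)$ once the truncation level is sent to infinity (here one checks that, because of the a priori bound on $\psi_n'(u)$ in $L^\infty(0,T;H)$ and the pointwise convergence, the truncated nonlinearity never ``sees'' the modification in the limit -- exactly as in Remark~\ref{pier-rem} the limit $u$ is shown to be continuous on $\overline Q$ with values in a fixed compact interval). The initial condition \eqref{3} passes to the limit by the $C^0([0,T];H)$ convergence.

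The main obstacle I expect is the second (higher-order) a priori estimate: making rigorous the time-differentiation of the doubly nonlinear inclusion \eqref{2} requires care, because $\beta$ is only a graph and $\psi'$ is merely $C^1$ with a one-sided bound on $\psi''$. At the approximate level this is fine -- $\beta_\lambda$ and $\psi_n'$ are Lipschitz, so one can legitimately test the differentiated equations -- but one must check that all constants are independent of $\lambda$ and $n$; the delicate point is controlling $\int_\Omega\psi_n''(u)\,(\partial_t u)^2$, which is bounded below by $-K\|\partial_t u\|_H^2$ (absorbable) but has no useful upper bound, so the estimate must be organized so that this term appears with the favorable sign, typically by testing \eqref{2} differentiated in time by $\partial_t u$ rather than by $\partial_t\mu$. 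Once this estimate is secured uniformly, the remaining elliptic-regularity bootstrapping and the limit passage are routine, and the compatibility of the initial triple $(\mu_0,u_0,u_0')$ constructed earlier guarantees that the approximate solutions start from admissible data.
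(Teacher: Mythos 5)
Your overall scheme (regularize, energy estimate, a second time-regularity estimate, use the sublinearity of $\beta$ to bound $\xi$, elliptic regularity, then compactness and monotone-graph closure) is the same as the paper's, but two of your key steps fail as stated. First, the way you propose to identify the nonlinearities in the limit is not justified under the hypotheses of Theorem~\ref{thm1bis}. (a) Removing the truncation of $\psi$ by arguing that the limit $u$ is continuous on $\overline Q$ ``with values in a fixed compact interval, exactly as in Remark~\ref{pier-rem}'' uses $(a,b)=\Ar$, which is part of \eqref{growth_psi} and is \emph{not} assumed here; the paper explicitly remarks after Theorem~\ref{contdep} that it is unclear whether a uniform localization like \eqref{hyp-pier} holds in the setting of \eqref{beta_sublinear}. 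The paper avoids this by writing $\psi'=\gamma-K\,\mathrm{id}$ with $\gamma$ maximal monotone, approximating $\gamma$ by its Yosida regularization (truncating only the linear term $Ku$), and identifying the limit $\eta=\gamma(u)$ through strong--weak closedness of the graph, which needs no separation of $u$ from $a$ and $b$. (b) Your identification of $\xi\in\beta(\partial_t u)$ relies on strong convergence of $\partial_t u_\lambda$ in $L^2(Q)$ via Aubin--Lions, but no bound on $\partial_{tt}u_\lambda$ in any negative space is available (the limit is only $W^{1,\infty}(0,T;H)\cap H^1(0,T;V)$), so there is no compactness in time for $\partial_t u_\lambda$. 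Both $\beta_\lambda(\partial_t u_\lambda)$ and $\partial_t u_\lambda$ converge only weakly, and the inclusion \eqref{incl} must be obtained, as in the paper, from the inequality $\limsup_{\lambda\searrow0}\int_Q\beta_\lambda(\partial_t u_\lambda)\,\partial_t u_\lambda\le\int_Q\xi\,\partial_t u$, proved by comparing the approximate energy identity with the energy identity satisfied by the limit triplet (including the convergence of $\int_\Omega\widehat{\gamma_\lambda}(u_\lambda(T))$ to $\int_\Omega\widehat{\gamma}(u(T))$), before invoking maximal monotonicity.

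Second, in the higher-order estimate you flag the wrong term as the delicate one: $\int_\Omega\psi_n''(u)|\partial_t u|^2\ge-K\|\partial_t u\|_H^2$ is indeed absorbable by Gronwall and the first estimate. The real obstruction is $\int_\Omega\partial_t\bigl[\beta_\lambda(\partial_t u)\bigr]\partial_t u$, which after differentiation reads $\int_\Omega\beta_\lambda'(\partial_t u)\,\partial_{tt}u\,\partial_t u$ and has no sign, while $\partial_{tt}u$ is not controlled. The paper handles it at the level of time difference quotients (precisely because the time derivative cannot be taken) via the convexity inequality $\bigl(\beta_\lambda(v^i)-\beta_\lambda(v^{i-1})\bigr)v^i\ge\widehat{\beta_\lambda^{-1}}\bigl(\beta_\lambda(v^i)\bigr)-\widehat{\beta_\lambda^{-1}}\bigl(\beta_\lambda(v^{i-1})\bigr)$, turning the term into a telescoping quantity, and this requires the uniform bound \eqref{est_init} on $\int_\Omega\widehat{\beta_\lambda^{-1}}\bigl(\beta_\lambda(u_{0,\lambda}')\bigr)$ coming from the $\lambda$-dependent compatibility system for the approximate initial data, not merely the pair $(\mu_0,u_0')$ built for the original problem. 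Without this device the second estimate, hence the $W^{1,\infty}(0,T;H)$ regularity, the $L^\infty(0,T;H)$ bound on $\beta_\lambda(\partial_t u_\lambda)$ obtained from \eqref{beta_sublinear} and \eqref{est7}, and the subsequent elliptic bootstrapping all collapse. Your choice of Faedo--Galerkin plus truncation instead of the paper's abstract doubly nonlinear existence result combined with a Schauder fixed point is a legitimate alternative for the approximate level, but it does not remove either of these two gaps.
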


\pier{The following \ele{result} states a continuous dependence of the solution upon the data in a particular situation.
Of course, in the case here considered also the uniqueness property follows.}

\begin{thm}
  \label{contdep} 
Let $(g_i, u_{0,i} )$, $i=1,2,$ be two sets of data satisfying \eqref{g}--\eqref{u_0} and let  $(u_i, \mu_i, \xi_i)$
denote the corresponding solutions. We assume that there exist  $ \overline a >a, \  \overline  b < b $ such that 
\beq \overline a \leq u_i (x,t) \leq \overline b \quad \hbox{for all }\,  (x,t) \in \overline Q , \quad i=1,2. \label{hyp-pier}
\eeq
Then, there exist a constant $C_d$, depending only on the data, such that 
  \begin{align}
   \label{dipcont}
    &\| \mu_1 -\mu_2 \|_{L^2(0,T;V_0)} +   \| u_1 - u_2 \|_{H^1(0,T;H) \cap L^\infty (0,T;V)} + \int_Q (\xi_1 - \xi_2 )
    (\partial_t u_1 - \partial_t u_2) \nonumber \\ 
    &\leq C_d \left(
  \| g_1 -g_2 \|_{L^2(0,T;H)} +  \| u_{0,1} - u_{0,2} \|_V \right). 
  \end{align}
\end{thm}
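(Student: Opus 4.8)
The plan is to derive \eqref{dipcont} — and, as the particular case $g_1=g_2$, $u_{0,1}=u_{0,2}$, uniqueness — from a single energy estimate on the difference of the two solutions, closed by Gronwall's lemma. Write $u:=u_1-u_2$, $\mu:=\mu_1-\mu_2$, $\xi:=\xi_1-\xi_2$ and, for the data, $g:=g_1-g_2$, $u_0:=u_{0,1}-u_{0,2}$. Subtracting \eqref{1}--\eqref{3} written for $i=1,2$ yields, for a.e.~$t\in(0,T)$, the identities $\partial_t u=\Delta\mu$ and $\mu=\partial_t u+\xi-\Delta u+\psi'(u_1)-\psi'(u_2)+g$ in $H$, together with $u(0)=u_0$. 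Moreover $\xi_i\in\beta(\partial_t u_i)$, so the monotonicity of $\beta$ gives $(\xi_1-\xi_2)(\partial_t u_1-\partial_t u_2)\ge 0$ a.e.~in $Q$; hence the last summand on the left-hand side of \eqref{dipcont} is a non-negative quantity, which will simply be kept on the left throughout the estimate.

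The core step is to test the second identity by $\partial_t u(t)\in V$ for a.e.~$t$. Using the first identity together with $\mu\in V_0\cap H^2(\Omega)$ to rewrite $\int_\Omega\mu\,\partial_t u=\int_\Omega\mu\,\Delta\mu=-\|\nabla\mu\|_H^2$, and $u_i\in W$ to rewrite $-\int_\Omega\Delta u\,\partial_t u=\frac12\frac{d}{dt}\|\nabla u\|_H^2$ (all boundary contributions vanish, since $\partial_{\bf n}u=0$ and $\mu=0$ on $\Gamma$), one obtains
\begin{equation*}
  \|\partial_t u\|_H^2+\int_\Omega\xi\,\partial_t u+\|\nabla\mu\|_H^2+\tfrac12\tfrac{d}{dt}\|\nabla u\|_H^2 = -\int_\Omega\big(\psi'(u_1)-\psi'(u_2)\big)\partial_t u-\int_\Omega g\,\partial_t u .
\end{equation*}
Integrating over $(0,t)$, the initial contribution is $\tfrac12\|\nabla u_0\|_H^2\le\tfrac12\|u_0\|_V^2$, the $g$-term is absorbed by Young's inequality (producing $\tfrac14\int_0^t\|\partial_t u\|_H^2+\|g\|_{L^2(0,T;H)}^2$), and the $\psi'$-term is where the separation hypothesis \eqref{hyp-pier} becomes decisive: since $\psi\in C^2(a,b)$ and $a<\overline a\le u_i\le\overline b<b$ on $\overline Q$, the Lipschitz constant $L:=\max_{[\overline a,\overline b]}|\psi''|$ of $\psi'$ on $[\overline a,\overline b]$ is finite, so $|\psi'(u_1)-\psi'(u_2)|\le L\,|u|$ a.e., and Young's inequality bounds that term by $\tfrac14\int_0^t\|\partial_t u\|_H^2+L^2\int_0^t\|u(s)\|_H^2\,ds$.

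To close the estimate I would absorb the two $\tfrac14\|\partial_t u\|_H^2$ contributions on the left, discard the non-negative terms not needed at that stage, and control $\int_0^t\|u(s)\|_H^2\,ds$ through $u(s)=u_0+\int_0^s\partial_t u$, which gives $\|u(s)\|_H^2\le 2\|u_0\|_H^2+2T\int_0^s\|\partial_t u\|_H^2$. This leads to an inequality of the form $\varphi(t)\le C_0\big(\|u_0\|_V^2+\|g\|_{L^2(0,T;H)}^2\big)+C_1\int_0^t\varphi$, where $\varphi(t):=\int_0^t\|\partial_t u\|_H^2$ and $C_0,C_1$ depend only on $T$, $\overline a$, $\overline b$, $\psi$; Gronwall's lemma then bounds $\varphi(T)$, hence $\|u\|_{H^1(0,T;H)}$, by the right-hand side of \eqref{dipcont}. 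Reinserting this bound into the integrated identity controls, in turn, $\int_Q(\xi_1-\xi_2)(\partial_t u_1-\partial_t u_2)$, $\|\nabla\mu\|_{L^2(0,T;H)}$ — equivalently $\|\mu\|_{L^2(0,T;V_0)}$, by Poincar\'e's inequality since $\mu\in V_0$ — and $\sup_{t}\|\nabla u(t)\|_H$, which together with $\sup_{t}\|u(t)\|_H$ (from the same representation of $u$) yields $\|u\|_{L^\infty(0,T;V)}$; since $C_d$ is allowed to depend on the data, this is exactly \eqref{dipcont}.

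I expect the only genuine obstacle to be the $\psi'$-term, i.e.~the indispensability of \eqref{hyp-pier}: without an a priori bound keeping $u_1,u_2$ away from the endpoints $a,b$, the function $\psi'$ need not be Lipschitz — indeed it blows up at $a$ and $b$ by \eqref{psi4} — and the estimate above collapses; this is precisely why continuous dependence (and uniqueness) are asserted only under the separation assumption \eqref{hyp-pier}. All the remaining manipulations are routine, once one observes that the regularity \eqref{u}--\eqref{mu} makes both the test by $\partial_t u$ and the chain rule for $t\mapsto\|\nabla u(t)\|_H^2$ fully legitimate.
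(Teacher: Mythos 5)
Your argument is correct and follows essentially the same route as the paper's proof: test the difference of \eqref{1} by $\mu_1-\mu_2$ and the difference of \eqref{2} by $\partial_t(u_1-u_2)$, use the monotonicity of $\beta$, the Lipschitz continuity of $\psi'$ on $[\overline a,\overline b]$ guaranteed by \eqref{hyp-pier}, Young's inequality and Gronwall's lemma, with Poincar\'e recovering the $V_0$-norm of $\mu_1-\mu_2$. The only (cosmetic) difference is that the paper adds and subtracts $u_1-u_2$ in \eqref{2} so as to obtain the full $V$-norm $\tfrac12\|u_1(t)-u_2(t)\|_V^2$ on the left and apply Gronwall directly to it, whereas you keep only the gradient part and recover the $H$-norm of $u_1-u_2$ from $u(s)=u_0+\int_0^s\partial_t u$ before applying Gronwall to $\int_0^t\|\partial_t u\|_H^2$.
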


\pier{Note that in the framework of Theorem~\ref{thm1}, and more generally whenever $(a,b)=\Ar$,  
the condition \eqref{hyp-pier} is satisfied for all solutions 
(cf.~Remark~\ref{pier-rem}). \ele{Thus, under assumptions \eqref{growth_psi}, Theorems \ref{thm1} and \ref{contdep} actually ensure existence, uniqueness, and continuous dependence on data of the solution of our problem. This result cannot be extended to the case of the (weaker) assumptions of Theorem \ref{thm1bis}. Indeed, in} the setting of Theorem~\ref{thm1bis} it turns out that 
$u_1, u_2 \in C^0(\,\pier{\overline{Q}} \,)$ as well, but it is not clear whether \eqref{hyp-pier} holds true in this case.}

\pieluc{\begin{rmk}\label{pierrem}
We think it is worth to make a comparison between our results and the existence results proved in the paper~\cite{MS}, of which we have been aware only after a first version of our paper 
was accomplished. In \cite[Theorem~2.1]{MS} the authors show the existence of a weaker solution under the assumption~\eqref{beta_sublinear} and lighter conditions on the initial datum $u_0$, that is, $u_0 \in V$ and $ \psi(u_0) \in L^1 (\Omega).$ In \cite[Theorem~2.2]{MS} the assumption \eqref{beta_sublinear} is maintained but the 
hypotheses on the initial values are even stronger than \eqref{u_0}; on the other hand, 
they can avoid the coercive term $\partial_t u $ in \eqref{2}. The third existence result 
\cite[Theorem~2.4]{MS} can be compared with our Theorem~\ref{thm1}; however, it requires the condition \eqref{growth_psi} for $\psi'$ and not for $\psi''$, so that our assumption turns out more general. 
\end{rmk}}

\pier{We conclude this section by stating a general rule concerning the \ele{notation for} constants 
that appear in the estimates to be performed in the sequel.
The small-case symbol $c$ stands for a generic constant
whose values might change from line to line and even within the same line of the inequalities
and can depend only on~$\Omega$, $T$, on the data of our problem, 
and on the \ele{fixed} constants and the norms of the functions involved in the assumptions of our statements.}

\ele{For the sake of clarity, let us sketch the outline of the proof of our results. Due to the high nonlinear structure we have to approximate our system. 
\begin{itemize}
\item We first rewrite \eqref{eq2} in terms of the variable $u$. Then, we approximate the nonlinear contributions in the resulting equation  through the Yosida approximations of the involved maximal monotone graphs and the introduction of a truncation operator. To prove existence of a solution for the approximated system, we perform a fixed point argument. 
\item We establish uniform estimates on the approximated solutions, independently of the approximating parameter. Due to the variational structure of the evolution system, we are able to get the so-called ``energy estimate'' (see Remark \ref{rem_flux}). Then, we perform further refined estimates \pcol{which are important} to deal with the doubly nonlinear structure of the approximated version of \eqref{eq2}. These estimates are different, in the case of the assumptions of Theorem~\ref{thm1} or~\ref{thm1bis}. 
\item Due to the uniform bounds on the solutions, we can pass to the limit with respect to the vanishing approximating parameter by (weak-strong) compactness and lower semicontinuity results, identifying the limit as a solution to the original system. 
\item Finally, contracting estimates ensure the continuous \pcol{dependence} result stated by Theorem~\ref{contdep}. This \pcol{result} holds \pcol{in particular} under the assumptions of Theorem \ref{thm1bis}, so that it completes the existence \pcol{analyis providing the} uniqueness of the solution.
\end{itemize}
}


\section{The approximated problem}
\setcounter{equation}{0}
\label{Approx}

In this section, we approximate the original problem and show
existence and uniqueness of the approximated solutions.
To this end, we need some preparatory work.

First of all, we introduce the operator $G$ on $H$ as the inverse of the \pier{Laplacian}
with \pier{homogeneous} Dirichlet boundary conditions: more precisely, for any $f\in H$, we set $Gf$ as the unique solution $y$ to the system
\[
  \begin{cases}
  -\Delta y=f &\text{in } \Omega\,,\\
  y=0 &\text{on } \Gamma\,.
  \end{cases}
\]
It is well-known that $G:H\rarr H^2(\Omega)\cap H^1_0(\Omega)$ is well-defined and continuous.
With this notation, note that equation \eqref{eq1} together with the respective boundary condition on $\mu$ contained in
\eqref{boundary} can be rewritten in a compact form as
\[
  \mu=-G\partial_t u\,.
\]
Consequently, substituting this expression in \eqref{eq2},
we can write equation \eqref{eq2} only in terms of 
the variable $u$:
\[
  \partial_t u + G\partial_t u +\beta(\partial_t u)-\Delta u+\psi'(u)+g\ni0\quad\text{in } \Omega\times(0,T)\,.
\]

Secondly, we introduce the function
\beq
  \label{gamma}
  \gamma:(a,b)\rarr\Ar\,, \qquad \gamma(r):=\psi'(r)+Kr \quad\forall\, r\in\ele{(a,b)}
\eeq
and we note that the hypotheses \eqref{psi2} and \eqref{psi5} ensure that $\gamma$
is a maximal monotone graph in $\Ar\times\Ar$\pier{, as well as a $C^2$ increasing function with range all of $\Ar$}. As a consequence, there exists
$x_0\in(a,b)$ such that $0\in\gamma(x_0)$ and it is \pier{well defined}
the proper, convex and lower semicontinuous function 
\beq
  \label{gamma_hat}
  \widehat{\gamma}:\Ar\rarr[0,+\infty] \quad\text{such that}\quad \widehat{\gamma}(x_0)=0\,, \quad \partial\widehat{\gamma}=\gamma\,.
\eeq 
Moreover, we \pier{specify} the operators on $H$
\begin{gather}
  \label{a}
  A:=I+G+\beta\,, \quad D(A):=\left\{y\in H: \;\exists\,\xi\in H\,,\; \xi\in\beta(y) \text{ a.e.~in } \Omega\right\}\,,\\
  \label{b}
  B:=-\Delta+\gamma\,, \quad D(B):=\left\{y\in \pier{W: \;  \gamma(y)\in H}\right\}\,,
\end{gather}
where the symbol $I$ denotes the identity in $H$.
Taking these remarks into account, 
the \pier{system \eqref{eq1}--\eqref{init} takes the abstract form}
\begin{gather}
\pier{A(\partial_t u (t))+B(u(t) ) \ni Ku(t)-g(t) \quad \hbox{in }H, \, \hbox{ for a.e. }\, t\in (0,T),}\nonumber
\\ 
\pier{u(0)=u_0\quad\hbox{in } H.} \nonumber
\end{gather}

We are now ready to build the approximation.
For every $\lambda\in(0,1)$, we introduce the operators
\begin{gather}
  \label{a_approx}
  A_\lambda:=I+G+\beta_\lambda\,, \quad D(A_\lambda):=H\,,\\
  \label{b_approx}
  B_\lambda:=-\Delta+\gamma_\lambda+\lambda I\,, \quad D(B_\lambda):=\pier{W}\,,
\end{gather}
where $\beta_\lambda$ and $\gamma_\lambda$ are the Yosida approximations of $\beta$ and $\gamma$, respectively.
\pier{We point out that (see, e.g., \cite[Prop.~2.6, p.~28]{Brezis}) 
\begin{gather}
|\beta_\lambda (r)| \leq |\beta^\circ (r)| \quad \hbox{for all } r\in D(\beta) ,  \label{yos1} \\
|\gamma_\lambda (r)| \leq |\gamma (r)| \quad \hbox{for all } r\in (a,b) , \label{yos2}
\end{gather} 
where $\beta^\circ (r)$ is the element of $\beta (r)$ with minimal modulus; on the other hand, in view of \eqref{gamma} note that 
$\gamma$ is single-valued.} Furthermore, let $T_\lambda$ be the usual truncation operator at level $\pier{1/\lambda}$, i.e.
\beq
  \label{trunc}
  T_\lambda:\Ar\rarr\Ar\,, \qquad T_\lambda(r):=
  \begin{cases}
  \pier{1/\lambda} \quad&\text{if } r>\pier{1/\lambda}\,,\\
  r &\text{if } |r|\leq\pier{1/\lambda}\,,\\
  -\pier{1/\lambda} &\text{if } r<-\pier{1/\lambda}\,,
  \end{cases}
  \quad r\in\Ar\,.
\eeq
The approximated problem is the following:
\begin{gather}
  \label{approx1}
\pier{  A_\lambda(\partial_t u_\lambda(t)) + B_\lambda(u_\lambda(t)) = KT_\lambda(u_\lambda(t)) - g (t)\quad \hbox{in }H, \, \hbox{ for a.e. }\, t\in (0,T),}
\\ 
\pier{\quad u_\lambda(0)=u_0 \quad\hbox{in } H.} \label{approx2}
\end{gather}

The idea \ele{we follow} to prove existence of solutions for \eqref{approx1}--\eqref{approx2} is to use a fixed point argument in the following way:
we fix a suitable $v$ instead of $u_\lambda$ in the right-hand side of \eqref{approx1}--\eqref{approx2}, 
we solve the resulting equation using an existence result
for abstract doubly nonlinear parabolic equations and we prove then some estimates on the solutions allowing us
to apply \pier{the} \piecol{Schauder} fixed point theorem. Let us now go into the details.

We recall some properties of $A_\lambda$ and $B_\lambda$ in the following lemmata. \ele{More precisely, we need to point out that they are maximal monotone operators 
and that some coerciveness and continuity properties hold, with respect to suitable norms. In particular, the following results will enable us to refer to the theory of maximal monotone operators to ensure existence of solutions in the fixed point argument we are  introducing.}
\begin{lem}
  \label{lemma1}
  For any $\lambda\in(0,1)$, the operators $A_\lambda$ and $B_\lambda$ are maximal monotone on $H$.
  Moreover, the following conditions hold:
  \begin{align*}
  (i) \quad&\forall\,y\in H \quad \left(A_\lambda y, y\right)\geq\l|y\r|_H^2\,,\\
  (ii) \quad&\exists\,C_\lambda>0: \quad\forall \,y\in H \quad \l|A_\lambda y\r|_H\leq C_\lambda\l|y\r|_H\,,\\
  (iii) \quad &B_\lambda=\partial\phi_\lambda\,, \, \hbox{ \pier{with} } \,\phi_\lambda:H\rarr(-\infty,+\infty] \text{ proper, convex, lower semicontinuous}\,,\\
  (iv) \quad &D(\phi_\lambda)\subseteq V \quad\text{and}\quad\phi_\lambda(y)\geq\frac{\lambda}{2}\l|y\r|^2_V \quad\forall\, y\in V\,.
  \end{align*}
\end{lem}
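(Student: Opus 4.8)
The plan is to treat $A_\lambda$ and $B_\lambda$ separately, producing in each case an explicit convex potential on $H$, from which maximal monotonicity --- and, for $B_\lambda$, property~(iii) --- follows at once; the remaining statements (i), (ii), (iv) are then direct computations.

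\emph{The operator $A_\lambda$.} First I would record that $G\colon H\rarr H$ is linear, bounded, self-adjoint and non-negative: for $y\in H$, testing $-\Delta Gy=y$ with $Gy\in H^1_0(\Omega)$ gives $(Gy,y)=\l|\nabla Gy\r|_H^2\ge0$, and symmetry follows in the same way. Since $0\in\beta(0)$, the Yosida regularization satisfies $\beta_\lambda(0)=0$, so $\beta_\lambda$ is monotone, $(1/\lambda)$-Lipschitz and obeys $r\,\beta_\lambda(r)\ge0$ and $|\beta_\lambda(r)|\le(1/\lambda)\,|r|$ for every $r\in\Ar$; hence the Nemytskii operator it induces maps $H$ into $H$. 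It follows that $A_\lambda=I+G+\beta_\lambda$ is everywhere defined on $H$, monotone and Lipschitz continuous, which already forces it to be maximal monotone; equivalently, $A_\lambda$ is the gradient of the convex, everywhere finite and continuous functional $y\mapsto\tfrac12\l|y\r|_H^2+\tfrac12(Gy,y)+\int_\Omega\widehat{\beta}_\lambda(y)$ on $H$, where $\widehat{\beta}_\lambda$ is the Moreau--Yosida regularization of $\widehat{\beta}$ (finite on $H$ since $0\le\widehat{\beta}_\lambda(r)\le r^2/(2\lambda)$). Then (i) comes from expanding $(A_\lambda y,y)=\l|y\r|_H^2+(Gy,y)+\int_\Omega\beta_\lambda(y)\,y\ge\l|y\r|_H^2$, using the two non-negativities above, and (ii) from $\l|A_\lambda y\r|_H\le\l|y\r|_H+\l|Gy\r|_H+\l|\beta_\lambda y\r|_H\le(1+c_G+1/\lambda)\l|y\r|_H$, with $c_G>0$ such that $\l|Gy\r|_H\le c_G\l|y\r|_H$ for all $y\in H$; thus $C_\lambda:=1+c_G+1/\lambda$ works.

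\emph{The operator $B_\lambda$.} Here I would introduce the candidate potential
\[
\phi_\lambda(y):=\begin{cases}\dfrac12\l|\nabla y\r|_H^2+\displaystyle\int_\Omega\widehat{\gamma}_\lambda(y)+\dfrac\lambda2\l|y\r|_H^2 & \text{if }y\in V,\\[2mm] +\infty & \text{otherwise in }H,\end{cases}
\]
with $\widehat{\gamma}_\lambda$ the Moreau--Yosida regularization of the function $\widehat{\gamma}$ from \eqref{gamma_hat}; note that $\widehat{\gamma}_\lambda\ge0$ because $\widehat{\gamma}\ge0$, and that $\widehat{\gamma}_\lambda\in C^1(\Ar)$ with $(\widehat{\gamma}_\lambda)'=\gamma_\lambda$ being $(1/\lambda)$-Lipschitz, so $y\mapsto\int_\Omega\widehat{\gamma}_\lambda(y)$ is finite, convex and continuous on $H$ with gradient the Nemytskii operator induced by $\gamma_\lambda$. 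Each summand of $\phi_\lambda$ is convex and lower semicontinuous on $H$ (the Dirichlet term, set equal to $+\infty$ off $V$, being the usual lower semicontinuous convex functional, by weak $H^1$-compactness), so $\phi_\lambda$ is proper, convex and lower semicontinuous; this gives (iii) once $\partial\phi_\lambda$ is identified. I would invoke the classical fact that the $H$-subdifferential of the Dirichlet functional is $-\Delta$ with domain exactly $W$ (the homogeneous Neumann condition being encoded in the subdifferential), and then, since the other two summands are finite and continuous on all of $H$, use additivity of the subdifferential to conclude $\partial\phi_\lambda=-\Delta+\gamma_\lambda(\cdot)+\lambda I=B_\lambda$ with $D(\partial\phi_\lambda)=W=D(B_\lambda)$; in particular $B_\lambda$ is maximal monotone. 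Finally, (iv) is immediate: $D(\phi_\lambda)\subseteq V$ by construction, and for $y\in V$, using $\widehat{\gamma}_\lambda\ge0$ and $\lambda<1$,
\[
\phi_\lambda(y)\ge\tfrac12\l|\nabla y\r|_H^2+\tfrac\lambda2\l|y\r|_H^2\ge\tfrac\lambda2\bigl(\l|\nabla y\r|_H^2+\l|y\r|_H^2\bigr)=\tfrac\lambda2\l|y\r|_V^2 .
\]

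I expect the only genuinely delicate point to be the identification of $\partial\phi_\lambda$: one must argue carefully that the $H$-subdifferential of the Dirichlet energy has domain precisely $W$ (so that elliptic regularity and the Neumann boundary condition are really in force) and that the sum rule for subdifferentials applies because the zero-order and nonlinear terms are everywhere continuous on $H$. Everything else --- the monotonicity and Lipschitz continuity of $A_\lambda$, and the estimates (i), (ii), (iv) --- is routine.
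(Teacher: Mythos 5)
Your proof is correct and takes essentially the same approach as the paper: the candidate potential $\phi_\lambda$ you propose for (iii) is exactly the one in the paper, and (i), (ii), (iv) are established by the same direct computations. The only small difference is that you deduce the maximal monotonicity of $B_\lambda$ from the subdifferential characterization (iii), whereas the paper first establishes it by citing standard perturbation results for maximal monotone operators (a Lipschitz monotone perturbation of the Neumann Laplacian) and then verifies (iii) separately --- both are standard, and your derivation is arguably the leaner of the two.
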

\begin{proof}
  Firstly, $G$ is monotone (since so is $-\Delta$), linear and continuous, hence maximal monotone:
  this implies\pier{,} together with the Lipschitz continuity of $\beta_\lambda$ and $\gamma_\lambda$ that
  $A_\lambda$ and $B_\lambda$ are maximal monotone on $H$\pier{; concerning $B_\lambda$, see, e.g., \cite[Cor.~1.3, p.~48]{Barbu} or \cite[Lemme~2.4, p.34]{Brezis}}. 
  Secondly, $(i)$ and $(ii)$ are \pier{rather} obvious \pier{due to} the monotonicity of $G+\beta_\lambda$, \pier{the definition of $G$, the Lipschitz continuity of $\beta_\lambda$ and the fact that $\beta_\lambda (0)=0$.}
  Let us focus on $(iii)$: let $\widehat{\gamma_\lambda}:\Ar\rarr[0,+\infty)$ be the
  \pier{Moreau--Yosida} regularization of $\widehat{\gamma}$. 
  Then it is a standard matter to check that $(iii)$ is satisfied with
  the following choice:
  \beq
  \label{fi_lam}
  \phi_\lambda(\pier{v}):=\begin{cases}
  \frac12\int_\Omega|\nabla \pier{v}|^2 + \int_\Omega\widehat{\gamma_\lambda}(\pier{v}) 
  + \frac{\lambda}{2}\int_\Omega|\pier{v}|^2 \quad&\text{if } \pier{v}\in V\,,\\
  +\infty &\text{if } \pier{v}\notin V\,.
  \end{cases}
  \eeq
  Finally, it is clear that $(iv)$ holds by \pier{virtue of \eqref{fi_lam}} and the positivity of $\widehat{\gamma_\lambda}$.
\end{proof}

\begin{lem}
  \label{lemma2}
  The following conditions hold:
  \begin{align*}
  (v) \quad &A_\lambda=\partial h_\lambda\,,  \, \hbox{ \pier{with} } \,h_\lambda:H\rarr[0,+\infty] \text{ proper, convex, lower semicontinuous}\,,\\
\pier{(vi)} \quad &\pier{\hbox{the effective domain of the convex conjugate $h_\lambda^*$ is all of } H\,,}  \\
  \pier{(vii)}\quad &A_\lambda \text{ is bounded in } H \text{ (i.e.,~maps bounded sets in bounded sets)}\,,\\
  \pier{(viii)} \quad &B_\lambda:V\rarr V^* \text{ is strongly monotone and Lipschitz continuous}\,.
  \end{align*}
\end{lem}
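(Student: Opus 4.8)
The plan is to exhibit the convex potential $h_\lambda$ explicitly and read off $(v)$--$(vii)$ from its structure, and to verify $(viii)$ by a direct computation on the weak form of $B_\lambda$. Recalling that $I=\partial\big(\tfrac12\l|\cdot\r|_H^2\big)$, that $G$ is linear, bounded, self-adjoint and positive semidefinite on $H$ (being the solution operator of $-\Delta$ with homogeneous Dirichlet conditions, so that $(Gy,v)=(y,Gv)$ and $(Gy,y)=\int_\Omega|\nabla Gy|^2\ge0$), and that $\beta_\lambda=\partial\widehat{\beta_\lambda}$ with $\widehat{\beta_\lambda}$ the Moreau--Yosida regularization of $\widehat{\beta}$ satisfying $\widehat{\beta_\lambda}(0)=0$ and $0\le\widehat{\beta_\lambda}(r)\le\tfrac1{2\lambda}r^2$, I would set
\[
  h_\lambda(y):=\tfrac12\l|y\r|_H^2+\tfrac12(Gy,y)+\int_\Omega\widehat{\beta_\lambda}(y)\,,\qquad y\in H\,.
\]
Each of the three summands is finite and continuous on $H$ (the last is even locally Lipschitz, since $|\widehat{\beta_\lambda}(r)-\widehat{\beta_\lambda}(s)|\le\tfrac1\lambda\max\{|r|,|s|\}\,|r-s|$ because $\beta_\lambda$ is $\tfrac1\lambda$-Lipschitz with $\beta_\lambda(0)=0$), hence $h_\lambda:H\to[0,+\infty)$ is proper, convex and lower semicontinuous. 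Since the quadratic part $y\mapsto\tfrac12\l|y\r|_H^2+\tfrac12(Gy,y)$ is everywhere continuous with gradient $(I+G)y$, the sum rule for subdifferentials applies, and the classical identification of the subdifferential of $y\mapsto\int_\Omega\widehat{\beta_\lambda}(y)$ with the pointwise action of $\beta_\lambda$ (see, e.g., \cite{Brezis,Barbu}) gives $\partial h_\lambda=(I+G)+\beta_\lambda=A_\lambda$, which is $(v)$.

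For $(vi)$, I would use that $(Gy,y)\ge0$ and $\int_\Omega\widehat{\beta_\lambda}(y)\ge0$, whence $h_\lambda(y)\ge\tfrac12\l|y\r|_H^2$ for every $y\in H$; therefore, for every $z\in H$,
\[
  h_\lambda^*(z)=\sup_{y\in H}\big\{(z,y)-h_\lambda(y)\big\}\le\sup_{y\in H}\Big\{(z,y)-\tfrac12\l|y\r|_H^2\Big\}=\tfrac12\l|z\r|_H^2<+\infty\,,
\]
so the effective domain of $h_\lambda^*$ is all of $H$. Property $(vii)$ is then immediate — it is in fact exactly estimate $(ii)$ of Lemma~\ref{lemma1} — because $\l|A_\lambda y\r|_H\le\l|y\r|_H+\l|Gy\r|_H+\l|\beta_\lambda(y)\r|_H\le\big(1+\|G\|_{\mathcal L(H)}+\tfrac1\lambda\big)\l|y\r|_H$ for all $y\in H$, using the continuity of $G$ and the Lipschitz continuity of $\beta_\lambda$ together with $\beta_\lambda(0)=0$; hence bounded sets are mapped into bounded sets.

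Finally, for $(viii)$ I would read $B_\lambda$ as the operator $V\to V^*$ given by $\langle B_\lambda u,v\rangle:=\int_\Omega\nabla u\cdot\nabla v+\int_\Omega\gamma_\lambda(u)\,v+\lambda\int_\Omega uv$ (the homogeneous Neumann condition being built into the weak form). Using the monotonicity of $\gamma_\lambda$ to discard the (nonnegative) middle term, strong monotonicity follows from
\[
  \langle B_\lambda u_1-B_\lambda u_2,\,u_1-u_2\rangle\ge\l|\nabla(u_1-u_2)\r|_H^2+\lambda\l|u_1-u_2\r|_H^2\ge\min\{1,\lambda\}\,\l|u_1-u_2\r|_V^2\,,
\]
and Lipschitz continuity follows by estimating the three contributions separately, the middle one through $\l|\gamma_\lambda(u_1)-\gamma_\lambda(u_2)\r|_H\le\tfrac1\lambda\l|u_1-u_2\r|_H$ and the continuous embedding $H\embed V^*$. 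I do not anticipate a genuine obstacle here: the only points deserving some care are the subdifferential sum rule and the integral-functional identification invoked in $(v)$, and — behind $(vi)$ — recognizing that the full domain of $h_\lambda^*$ is merely a restatement of the coercivity bound $h_\lambda(y)\ge\tfrac12\l|y\r|_H^2$ already encoded in the definition of $A_\lambda$.
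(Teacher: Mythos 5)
Your proof is correct and for parts $(v)$, $(vii)$, $(viii)$ follows essentially the same lines as the paper. For $(v)$, the paper writes $h_\lambda(v)=\tfrac12\int_\Omega|v|^2+F^*(v)+\int_\Omega\widehat{\beta}_\lambda(v)$ with $F$ the Dirichlet quadratic form on $H^1_0(\Omega)$ and $F^*$ its Fenchel conjugate, so that $\partial F^*=(\partial F)^{-1}=G$ is immediate; your $\tfrac12(Gy,y)$ is exactly $F^*(y)$ written out, and the two presentations are equivalent (yours makes the gradient explicit, the paper's sidesteps the self-adjointness computation). The genuine difference is in $(vi)$: the paper establishes that $A_\lambda$ is bi-Lipschitz on $H$ via the two-sided estimate $\|v-w\|_H^2\le(A_\lambda v-A_\lambda w, v-w)\le(1+\|G\|_{\mathcal L(H,H)}+\tfrac1\lambda)\|v-w\|_H^2$, hence $A_\lambda$ is surjective, hence $A_\lambda^{-1}=\partial h_\lambda^*$ is everywhere defined and $D(h_\lambda^*)=H$; this packages $(vi)$ and $(vii)$ together in one stroke. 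You instead read $(vi)$ directly off the coercivity $h_\lambda(y)\ge\tfrac12\|y\|_H^2$ by a one-line Fenchel estimate $h_\lambda^*(z)\le\tfrac12\|z\|_H^2$, which is more elementary and even gives an explicit quantitative bound, but then you have to argue $(vii)$ separately (which you do, via the Lipschitz estimate, coinciding with $(ii)$ of Lemma~\ref{lemma1}). Both routes are valid; yours is shorter for $(vi)$ alone, the paper's is more economical for the pair $(vi)$--$(vii)$ and foreshadows the bi-Lipschitz property of $A_\lambda^{-1}$ used later in Lemma~\ref{approx_sol}.
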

\begin{proof}
  It is not difficult to check that a possible 
  choice of $h_\lambda$ is
  \beq
    \label{h_lam}
    h_\lambda(\pier{v}):=\frac12\int_\Omega|\pier{v}|^2+F^*(\pier{v})+\int_\Omega\pier{\widehat{\beta}_\lambda}(\pier{v})\,, \quad \pier{v}\in H\,,
  \eeq
  where \pier{$F^*$ denotes the conjugate of the proper, convex and lower semicontinuous function $F:H\rarr[0 +\infty]$} given by
  \[
    F(\pier{v}):=\begin{cases} \frac12\int_\Omega|\nabla \pier{v}|^2 \quad&\text{if }\pier{v}\in H^1_0(\Omega)\,,\\ +\infty &\text{otherwise}\,, \end{cases}
  \]
  and $\pier{\widehat{\beta}_\lambda}$ is the Moreau\pier{--Yosida} regularization of $\widehat{\beta}$, so that $(v)$ is proved.
  Secondly, since $G+\beta_\lambda$ is maximal monotone, the operator $A_\lambda$ is invertible and $A^{-1}_{\lambda}$ is \pier{well defined}.
  Moreover, for every $\pier{v},\pier{w}\in H$, by the monotonicity and the Lipschitz continuity of $G$ and $\beta_\lambda$, we have
  \[
  \l|\pier{v}-\pier{w}\r|_H^2\leq \left(A_\lambda \pier{v}-A_\lambda \pier{w}, \pier{v}-
  \pier{w}\right)\leq\left(1+\l|G\r|_{\cL(H,H)}+\frac{1}{\lambda}\right)\l|\pier{v}-
  \pier{w}\r|_H^2\,,
  \]
  so that $A_\lambda, A_\lambda^{-1}:H\rarr H$ are bi-Lipschitz continuous:
  this implies conditions $\pier{(vi)}$ and $\pier{(vii)}$.
  Finally, condition $\pier{(viii)}$ follows directly from the definition of $B_\lambda$ and the fact that $\gamma_\lambda$
  is monotone and Lipschitz continuous\pier{: of course, here the operator $B_\lambda$
has to be understood as the extension
$$\pier{\langle B_\lambda v, w \rangle = \int_\Omega \nabla v \cdot \nabla w + \int_\Omega(\gamma_\lambda (v) + \lambda v) w , \quad v,w \in V,}$$
this operator working from $V$ to $V^*$.}
\end{proof}

We are now ready to describe the fixed point argument. Fix $v\in L^2(0,T; H)$ and consider the problem
\begin{gather}
  \label{u_lamb1}
\pier{  A_\lambda(\partial_t u_\lambda(t)) + B_\lambda(u_\lambda(t)) = KT_\lambda(v(t)) - g (t)\quad \hbox{in }H, \, \hbox{ for a.e. }\, t\in (0,T),}
\\ 
\pier{\quad u_\lambda(0)=u_0 \quad\hbox{in } H.} \label{u_lamb2}
\end{gather}
By Lemma \ref{lemma1} and the hypotheses \eqref{g}--\eqref{u_0}, \pier{it turns out that} 
we can apply the existence result
contained in \cite[Thm.~2.1]{colli-visin}: we deduce that there exists
\beq
  \label{u_lam}
  u_\lambda \in H^1(0,T; H)\cap L^\infty(0,T; V)
\eeq
\pier{satisfying \eqref{u_lamb1}--\eqref{u_lamb2} and fulfilling}
\begin{gather}
  \label{app3}
  A_\lambda\left(\partial_t u_\lambda\right)\,, \;B_\lambda\left(u_\lambda\right) \;\in\; L^2(0,T; H)\,.
\end{gather}
It is natural now to state the following result, \ele{\pcol{which entails} a uniform bound on $u_\lambda$, in suitable norms.}
\begin{lem}
  \label{lemma3}
  If \pier{$v\in L^2(0,T; H)$ and{}} $u_\lambda$ satisfies \pier{\eqref{u_lam}--\eqref{app3} and \eqref{u_lamb1}--\eqref{u_lamb2}}, then 
  \beq
    \label{shaud1}
    \l|\partial_t u_{\lambda}\r|_{L^2(0,T; H)} + \l|u_{\lambda}\r|_{L^\infty(0,T; V)}
    \leq C_\lambda
  \eeq
  for \pier{some} positive constant $C_\lambda$ \pier{independent of $\| v \|_{L^2(0,T; H)} $}. Moreover, \pier{there is another constant $D_\lambda$ such that,} if
  $v_1, v_2\in L^2(0,T; H)$ and $(u_{\lambda, 1}, u_{\lambda, 2})$ are any 
  respective solutions to \pier{\eqref{u_lamb1}--\eqref{u_lamb2}}, then
  \beq
    \label{shaud2}
    \l|\partial_t(u_{\lambda, 1}-u_{\lambda, 2})\r|_{L^2(0,T; H)} + \l|u_{\lambda, 1}-u_{\lambda, 2}\r|_{L^\infty(0,T; V)}
    \leq \pier{D_\lambda} \l|v_1-v_2\r|_{L^2(0,T; H)}\,.
  \eeq
\end{lem}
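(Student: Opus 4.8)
The plan is to derive both estimates by testing equation \eqref{u_lamb1} --- and, for \eqref{shaud2}, the difference of the equations corresponding to $v_1$ and $v_2$ --- against the time derivative of the solution, and then to exploit the monotonicity and convexity properties recorded in Lemmata~\ref{lemma1} and \ref{lemma2} together with Gronwall's lemma.

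For \eqref{shaud1}, first I would pair \eqref{u_lamb1} with $\partial_t u_\lambda(t)\in H$ for a.e.\ $t$. By Lemma~\ref{lemma1}~$(i)$ we have $(A_\lambda(\partial_t u_\lambda),\partial_t u_\lambda)\ge\|\partial_t u_\lambda\|_H^2$, while Lemma~\ref{lemma1}~$(iii)$, \eqref{app3} and the chain rule for subdifferentials of convex functions (see, e.g., \cite[Lemme~3.3, p.~73]{Brezis}) give $(B_\lambda(u_\lambda),\partial_t u_\lambda)=\frac{d}{dt}\phi_\lambda(u_\lambda)$, the map $t\mapsto\phi_\lambda(u_\lambda(t))$ being absolutely continuous. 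On the right-hand side, a Young inequality and the pointwise bound $|T_\lambda(v(x,t))|\le 1/\lambda$ yield $(KT_\lambda(v)-g,\partial_t u_\lambda)\le\tfrac12\|\partial_t u_\lambda\|_H^2+\tfrac12\|KT_\lambda(v)-g\|_H^2$, the last term being bounded in $L^1(0,T)$ by a constant independent of $v$. Integrating over $(0,t)$, recalling that $\phi_\lambda(u_0)\le\tfrac12\|\nabla u_0\|_H^2+\|\widehat\gamma(u_0)\|_{L^1(\Omega)}+\tfrac\lambda2\|u_0\|_H^2<\infty$ (here $\widehat{\gamma_\lambda}\le\widehat\gamma$ and $\widehat\gamma(u_0)\in L^\infty(\Omega)$ thanks to $a_0\le u_0\le b_0$ in \eqref{u_0}), and using the coercivity $\phi_\lambda(y)\ge\tfrac\lambda2\|y\|_V^2$ of Lemma~\ref{lemma1}~$(iv)$, I obtain \eqref{shaud1}; note that the resulting $C_\lambda$ may depend on $\lambda,\Omega,T,K,g,u_0$ but not on $\|v\|_{L^2(0,T;H)}$.

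For \eqref{shaud2}, I would subtract the equations for $v_1$ and $v_2$, set $\bar u:=u_{\lambda,1}-u_{\lambda,2}$ and $\bar v:=v_1-v_2$, and test with $\partial_t\bar u$. Since $G$ is positive and $\beta_\lambda$ monotone, $(A_\lambda(\partial_t u_{\lambda,1})-A_\lambda(\partial_t u_{\lambda,2}),\partial_t\bar u)\ge\|\partial_t\bar u\|_H^2$. For the $B_\lambda$-contribution I would use the explicit form $B_\lambda y=-\Delta y+\gamma_\lambda(y)+\lambda y$: from \eqref{app3}, the Lipschitz continuity of $\gamma_\lambda$ and elliptic regularity with the Neumann condition built into $W$ one gets $u_{\lambda,i}\in L^2(0,T;W)$, hence (by a standard difference-quotient argument in time, integrating by parts and using $\partial_{\bf n}\bar u=0$ and $\bar u(0)=0$) $\int_0^t(-\Delta\bar u,\partial_t\bar u)=\tfrac12\|\nabla\bar u(t)\|_H^2$ and $\int_0^t\lambda(\bar u,\partial_t\bar u)=\tfrac\lambda2\|\bar u(t)\|_H^2$; the terms $\int_0^t(\gamma_\lambda(u_{\lambda,1})-\gamma_\lambda(u_{\lambda,2}),\partial_t\bar u)$ and $\int_0^t K(T_\lambda(v_1)-T_\lambda(v_2),\partial_t\bar u)$ are absorbed by Young's inequality using the $1/\lambda$-Lipschitz continuity of $\gamma_\lambda$ and the $1$-Lipschitz continuity of $T_\lambda$, the powers of $1/\lambda$ multiplying only $\|\bar u\|_H^2$. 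Collecting terms gives
\[
\tfrac12\int_0^t\|\partial_t\bar u\|_H^2 + \tfrac12\|\nabla\bar u(t)\|_H^2 + \tfrac\lambda2\|\bar u(t)\|_H^2 \;\le\; \tfrac1{\lambda^2}\int_0^t\|\bar u\|_H^2 + K^2\|\bar v\|_{L^2(0,T;H)}^2 ,
\]
and, since $\|\bar u\|_H^2\le\tfrac2\lambda\big(\tfrac12\|\nabla\bar u\|_H^2+\tfrac\lambda2\|\bar u\|_H^2\big)$, Gronwall's lemma bounds $\tfrac12\|\nabla\bar u(t)\|_H^2+\tfrac\lambda2\|\bar u(t)\|_H^2$, and then $\int_0^T\|\partial_t\bar u\|_H^2$, by a constant times $\|\bar v\|_{L^2(0,T;H)}^2$; this is \eqref{shaud2}. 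In particular, choosing $v_1=v_2$ yields uniqueness of the solution to \eqref{u_lamb1}--\eqref{u_lamb2}.

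I expect the main obstacle to be precisely the $B_\lambda$-term in \eqref{shaud2}: in contrast with \eqref{shaud1}, the cross term $(B_\lambda(u_{\lambda,1})-B_\lambda(u_{\lambda,2}),\partial_t\bar u)$ is not the time derivative of a convex functional, so the single-trajectory chain rule is unavailable and one must instead rely on the differential structure of $B_\lambda$ and justify carefully the integration by parts for the $-\Delta$ part, which is where the $W$-regularity of $u_{\lambda,i}$ and the homogeneous Neumann condition enter. All remaining estimates are routine, provided one keeps track that the constants, although they may blow up as $\lambda\to0^+$, stay independent of $v$ (respectively of $v_1,v_2$).
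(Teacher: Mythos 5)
Your proposal is correct and follows essentially the same route as the paper: test \eqref{u_lamb1} (or the difference of two such equations) by $\partial_t u_\lambda$ (resp.\ $\partial_t(u_{\lambda,1}-u_{\lambda,2})$), use the coercivity and monotonicity of $A_\lambda$, the chain rule for the convex potential $\phi_\lambda$ with $\partial\phi_\lambda=B_\lambda$, bound the truncated right-hand side via $|T_\lambda|\le 1/\lambda$ for \eqref{shaud1} and via the Lipschitz constants of $T_\lambda$ and $\gamma_\lambda$ for \eqref{shaud2}, and close with Young plus Gronwall. The only cosmetic differences are that the paper unpacks $\phi_\lambda$ and reads off $\tfrac12\|\nabla u_\lambda(t)\|_H^2$ directly rather than invoking the coercivity $\phi_\lambda(y)\ge\tfrac\lambda2\|y\|_V^2$ of Lemma~\ref{lemma1}~$(iv)$, and that you spell out the justification of the integration by parts for the $-\Delta$ contribution in \eqref{shaud2} (via the $W$-regularity coming from \eqref{app3} and elliptic regularity), which the paper leaves implicit.
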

\begin{proof}
  Let $u_\lambda$ be a solution to \pier{\eqref{u_lamb1}--\eqref{u_lamb2}}. Testing \pier{\eqref{u_lamb1}} by $\partial_t u_\lambda$, integrating
  on $[0,t]$ and taking into account the definitions of $A_\lambda$, $B_\lambda$ and $T_\lambda$,
  thanks to the Young inequality for every $t\in[0,T]$ we have
  \[
  \begin{split}
  \pier{\int_0^t\!\!\int_\Omega}&|\partial_t u_\lambda(s)|^2\,ds + 
  \pier{\int_0^t\!\!\int_\Omega}\left(G\partial_t u_\lambda(s) + \beta_\lambda(\partial_t u_\lambda(s))\right)\partial_t u_\lambda(s)\,ds\\
  &\qquad\qquad\qquad\qquad+\frac12\int_\Omega|\nabla u_\lambda(t)|^2 + 
  \int_\Omega\widehat{\gamma}(u_\lambda(t)) +
  \frac{\lambda}{2}\int_\Omega|u_\lambda(t)|^2\\
  &=\frac12\int_\Omega|\nabla u_0|^2 + 
  \int_\Omega\widehat{\gamma}(u_0) +
  \frac{\lambda}{2}\int_\Omega|u_0|^2 +
  \pier{\int_0^t\!\!\int_\Omega}\left(KT_\lambda \pier{( v(s))}-g(s)\right)\partial_t u_\lambda(s)\,ds\\
  &\leq\l|u_0\r|_V^2 + 
  \l|\widehat{\gamma}(u_0)\r|_{L^1(\Omega)} +
  \int_Q\left(\frac{K^2}{\lambda^2}+|g|^2\right) + \frac{1}{2}\pier{\int_0^t\!\!\int_\Omega}|\partial_t u_\lambda(s)|^2\,ds\,,
  \end{split}
  \]
  from which \eqref{shaud1} follows with \pier{obvious} choice of $C_\lambda$. \pier{Note indeed that $\int_\Omega\widehat{\gamma}(u_\lambda(t)) \geq 0 $ by \eqref{gamma_hat} and the second term on the left-hand side is non-negative by the monotonicity of $G$ and $\beta_\lambda$ and the facts that $G(0)=0$ and $\beta_\lambda (0)=0$.}
  Furthermore, testing the difference
  between \pier{\eqref{u_lamb1}} evaluated for $i=1,2$ by $\partial_t(u_{\lambda,1}-u_{\lambda,2})$,
  setting $v:=v_1-v_2$
   and $u_\lambda:=u_{\lambda, 1}-u_{\lambda, 2}$ for convenience, 
  by the monotonicity of $G+\beta_\lambda$
  it is easy to deduce
  \[
  \begin{split}
  &\pier{\int_0^t\!\!\int_\Omega}|\partial_t u_\lambda(s)|^2\,ds + \frac12\int_\Omega|\nabla u_\lambda(t)|^2 
  +\frac{\lambda}{2}\int_\Omega|u_\lambda(t)|^2\\
  &\quad\leq K\pier{\int_0^t\!\!\int_\Omega}(\pier{T_\lambda ( v_1)-T_\lambda( v_2)})(s)\partial_t u_\lambda(s)\,ds
  -\pier{\int_0^t\!\!\int_\Omega}\left(\gamma_{\lambda}(u_{\lambda, 1})-\gamma_\lambda(u_{\lambda, 2})\right)(s)\partial_t u_\lambda(s)\,ds\\
  &\quad\leq \frac{K}{\lambda}\pier{\int_0^t\!\!\int_\Omega}|v(s)|\,
  |\partial_t u_\lambda(s)| \,ds + 
  \frac{1}{\lambda}\pier{\int_0^t\!\!\int_\Omega}|u_\lambda(s)|\,
  |\partial_t u_\lambda(s)| \,ds\,.
  \end{split}
  \]
  By the already proved estimate \eqref{shaud1} and the Young inequality, we deduce
  \begin{align*}
  \frac12\pier{\int_0^t\!\!\int_\Omega}|\partial_t u_\lambda(s)|^2\,ds + \frac12\int_\Omega|\nabla u_\lambda(t)|^2 
  +\frac{\lambda}{2}\int_\Omega|u_\lambda(t)|^2 \\
  \leq \pier{\frac{K^2}{\lambda^2}\pier{\int_0^t\!\!\int_\Omega}|v(s)|^2 \,ds + 
  \frac{1}{\lambda^2}\pier{\int_0^t\!\!\int_\Omega}|u_\lambda(s)|^2 \,ds}
  \end{align*}
\pier{from which \eqref{shaud2} follows by applying the Gronwall lemma.}
\end{proof}

\ele{We are now in the position of detailing the fixed point argument.} 
It makes sense to introduce the set
\[
  X_\lambda:=\left\{v\in H^1(0,T; H)\cap L^\infty(0,T; V):\;
  \l|\partial_t v\r|_{L^2(0,T; H)} + \l|v\r|_{L^\infty(0,T; V)}
    \leq C_\lambda\right\}\,,
\]
where $C_\lambda$ is given as in \eqref{shaud1}.
By the second part of Lemma \ref{lemma3}, it is well-defined the map
\beq
  \label{shaud_map}
  \Xi: X_\lambda\rarr X_\lambda\,, \quad v\mapsto u_\lambda\,;
\eeq
since $X_\lambda$ is a closed convex set in $L^2(0,T; H)$ and $\Xi$ is continuous and compact with respect to the
topology of $L^2(0,T; H)$ by \eqref{shaud2}, Schauder's fixed point theorem ensures that there exists
\beq
  \label{u_app}
  u_\lambda \in H^1(0,T; H)\cap L^\infty(0,T; V)
\eeq
such that
\begin{gather}
   \label{1app}
  A_\lambda\left(\partial_t u_\lambda\right)\,, \;B_\lambda\left(u_\lambda\right) \;\in\; L^2(0,T; H)\,,\\
  \label{2app}
  A_\lambda\left(\partial_t u_\lambda(t)\right)+B_\lambda\left(u_\lambda(t)\right) = KT_\lambda \pier{(u_\lambda(t))}-g(t)
         \quad\text{for a.e.~}t\in(0,T)\,,\\
  \label{3app}
  u_\lambda(0)=u_0\,.
\end{gather}
Moreover, \pier{by arguing as in the proof of \eqref{shaud2}, the final application of the Gronwall lemma} implies that such $u_\lambda$ is also unique.

Finally, let us now deduce some regularity results for the approximated solutions: we collect them
in the following lemma.
\begin{lem}
  \label{approx_sol}
  In the current notation, setting $\mu_\lambda:=-G\partial_t u_\lambda$, we have
  \begin{gather}
    \label{u_lambda}
    u_\lambda \in \pier{C^1([0,T]; H)\cap H^1(0,T; V)  \cap C^0([0,T]; W) \cap L^2(0,T; H^3(\Omega))} \,,\\
    \label{mu_lambda}
      \mu_\lambda \in \pier{C^0([0,T]; V_0 \cap H^2(\Omega))\cap L^2(0,T; H^3(\Omega))}  
  \end{gather}
  and the following equations hold for every $t\in[0,T]$:
    \begin{gather}
    \label{eq1_approx}
      \partial_t u_\lambda(t)-\Delta\mu_\lambda(t)=0\,,\\[0.2cm]
      \mu_\lambda(t)=\partial_t u_\lambda(t) +\beta_\lambda(\partial_t u_\lambda(t))-
                         \Delta u_\lambda(t)  \hskip4cm \nonumber \\
                         \hskip4cm{}
                         +\gamma_\lambda(u_\lambda(t)) +\lambda u_\lambda(t) + g(t) -KT_\lambda \pier{(u_\lambda(t))}\,.
       \label{eq2_approx}                                               
  \end{gather}
\end{lem}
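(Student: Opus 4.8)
The plan is to read the abstract identity \eqref{2app} component by component, then to gain the missing time‑regularity of $u_\lambda$ by a differentiation‑in‑time estimate, and finally to bootstrap the spatial regularity of $u_\lambda$ and $\mu_\lambda$ by elliptic estimates. First, since $A_\lambda=I+G+\beta_\lambda$, $B_\lambda=-\Delta+\gamma_\lambda+\lambda I$, and since by definition of $G$ the function $\mu_\lambda:=-G\partial_t u_\lambda$ solves $-\Delta\mu_\lambda=\partial_t u_\lambda$ a.e.\ in $\Omega$ with $\mu_\lambda=0$ on $\Gamma$, identity \eqref{2app} splits exactly into \eqref{eq1_approx} and, after moving $G\partial_t u_\lambda=-\mu_\lambda$ to the right‑hand side, \eqref{eq2_approx}. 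From $\partial_t u_\lambda\in L^2(0,T;H)$ and the continuity of $G\colon H\to V_0\cap H^2(\Omega)$ one already gets $\mu_\lambda\in L^2(0,T;V_0\cap H^2(\Omega))$, while \eqref{1app} together with $u_\lambda\in L^\infty(0,T;V)$ and the Lipschitz continuity of $\gamma_\lambda$ gives $\Delta u_\lambda\in L^2(0,T;H)$, hence $u_\lambda\in L^2(0,T;W)$ by elliptic regularity for the Neumann Laplacian.

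The key point is to show that $\partial_t u_\lambda\in L^\infty(0,T;H)\cap L^2(0,T;V)$. I would first note that $f_\lambda:=KT_\lambda(u_\lambda)-g\in H^1(0,T;H)$ (because $u_\lambda\in H^1(0,T;H)$, $T_\lambda$ is Lipschitz and $g\in H^1(0,T;H)$), and that $\partial_t u_\lambda(0):=A_\lambda^{-1}\bigl(KT_\lambda(u_0)-g(0)-B_\lambda(u_0)\bigr)$ is a well‑defined element of $H$ whose norm is bounded — independently of $\lambda$ — in terms of $\|u_0\|_W$, $\|\psi'(u_0)\|_H$ and $\|g(0)\|_H$ only (via \eqref{yos2}, \eqref{u_0}, the embedding $H^1(0,T;H)\embed C^0([0,T];H)$, and the fact that $A_\lambda^{-1}$ is $1$‑Lipschitz with $A_\lambda^{-1}(0)=0$); this is the initial trace of $\partial_t u_\lambda$. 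Then one differentiates \eqref{2app} in time and tests by $\partial_t u_\lambda$ — formally, the procedure being made rigorous by a secondary approximation or by difference quotients in time. After integration over $(0,t)$, the contributions of $I$, $G$, $\gamma_\lambda$ and $\lambda I$ are nonnegative up to their finite values at $t=0$; the term $-\Delta$ produces $\int_0^t\|\nabla\partial_t u_\lambda\|_H^2$, the boundary integrals vanishing because the time‑increments of $u_\lambda$ belong to $W$; and the doubly nonlinear term is handled through the Moreau--Yosida potential $\widehat{\beta}_\lambda$ of $\widehat{\beta}$ via the identity
\[
\int_0^t\!\!\int_\Omega \partial_t\bigl(\beta_\lambda(\partial_t u_\lambda)\bigr)\,\partial_t u_\lambda
=\int_\Omega\Bigl(\beta_\lambda(\partial_t u_\lambda(t))\,\partial_t u_\lambda(t)-\widehat{\beta}_\lambda(\partial_t u_\lambda(t))\Bigr)-\bigl[\,\cdots\,\bigr]_{t=0},
\]
whose first term is nonnegative since $\beta_\lambda(r)\,r\ge\widehat{\beta}_\lambda(r)\ge0$ and $\widehat{\beta}_\lambda(0)=0$, and whose $t=0$ term is finite since $\partial_t u_\lambda(0)\in H$ and $\beta_\lambda$ is $\tfrac1\lambda$‑Lipschitz with $\beta_\lambda(0)=0$; finally, the right‑hand side $\int_0^t(\partial_t f_\lambda,\partial_t u_\lambda)$ is absorbed by Young's inequality, using $\partial_t f_\lambda=KT_\lambda'(u_\lambda)\partial_t u_\lambda-\partial_t g$ with $0\le T_\lambda'\le1$. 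Gronwall's lemma, together with the known bound $\partial_t u_\lambda\in L^2(0,T;H)$, then yields the asserted regularity (with $\lambda$‑dependent norm).

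To close, from the same identity one obtains $\partial_t\bigl(A_\lambda(\partial_t u_\lambda)\bigr)=\partial_t f_\lambda+\Delta\partial_t u_\lambda-\gamma_\lambda'(u_\lambda)\partial_t u_\lambda-\lambda\partial_t u_\lambda\in L^2(0,T;V^*)$ (since now $\partial_t u_\lambda\in L^2(0,T;V)$), whereas $A_\lambda(\partial_t u_\lambda)=\partial_t u_\lambda+G\partial_t u_\lambda+\beta_\lambda(\partial_t u_\lambda)\in L^2(0,T;V)$; hence $A_\lambda(\partial_t u_\lambda)\in H^1(0,T;V^*)\cap L^2(0,T;V)\embed C^0([0,T];H)$, and applying the Lipschitz map $A_\lambda^{-1}\colon H\to H$ gives $\partial_t u_\lambda=A_\lambda^{-1}\bigl(A_\lambda(\partial_t u_\lambda)\bigr)\in C^0([0,T];H)$, i.e.\ $u_\lambda\in C^1([0,T];H)$; this, with the previous step, is the time‑regularity in \eqref{u_lambda}. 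Then $\mu_\lambda=-G\partial_t u_\lambda$ inherits $C^0([0,T];V_0\cap H^2(\Omega))\cap L^2(0,T;H^3(\Omega))$ (using also the continuity of $G\colon V\to V_0\cap H^3(\Omega)$), which is \eqref{mu_lambda}; and rewriting \eqref{eq2_approx} as $(-\Delta+\lambda I)u_\lambda=\mu_\lambda-\partial_t u_\lambda-\beta_\lambda(\partial_t u_\lambda)-\gamma_\lambda(u_\lambda)-g+KT_\lambda(u_\lambda)$, whose right‑hand side now lies in $C^0([0,T];H)\cap L^2(0,T;V)$ (each nonlinear summand being a Lipschitz image of a function with that regularity), elliptic regularity for the Neumann problem gives $u_\lambda\in C^0([0,T];W)\cap L^2(0,T;H^3(\Omega))$, completing \eqref{u_lambda}. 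All terms of \eqref{eq1_approx}--\eqref{eq2_approx} being then continuous on $[0,T]$ with values in $H$, the two identities hold for every $t\in[0,T]$.

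I expect the main obstacle to be the time‑differentiation estimate: making it rigorous with the scarce a priori regularity at hand, and keeping the $t=0$ boundary terms finite and data‑controlled through the compatibility relation for $\partial_t u_\lambda(0)$ — which lies in $H$ but not, in general, in $V$, so that one cannot test by $\partial_t^2 u_\lambda$ and must settle for $\partial_t u_\lambda\in L^\infty(0,T;H)\cap L^2(0,T;V)$ — in particular taming the doubly nonlinear term $\beta_\lambda(\partial_t u_\lambda)$ by means of the monotonicity of $\beta_\lambda$ and its convex potential. The secondary device, namely that $A_\lambda(\partial_t u_\lambda)$ is continuous in time and can then be ``inverted'' through the Lipschitz map $A_\lambda^{-1}$, is what upgrades the result to $C^1$‑in‑time regularity.
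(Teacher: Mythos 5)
Your argument is correct, and it reaches the same conclusion as the paper, but it takes a genuinely different route for the key regularity step. The paper applies the abstract result \cite[Thm.~2.2]{colli-visin} for doubly nonlinear evolution equations, which---thanks to the structural properties established in Lemma~\ref{lemma2}---directly yields $u_\lambda\in H^1(0,T;V)$ and $A_\lambda\partial_t u_\lambda\in H^1(0,T;V^*)\cap L^\infty(0,T;H)$ in one stroke; the remainder of the paper's proof is then a short elliptic bootstrap: from $\Delta\mu_\lambda=\partial_tu_\lambda\in L^2(0,T;V)$ one gets $\mu_\lambda\in L^2(0,T;V_0\cap H^3(\Omega))$, then by comparison in \eqref{eq2_approx} $u_\lambda\in L^2(0,T;W\cap H^3(\Omega))$, hence $u_\lambda\in C^0([0,T];W)$, hence $z_\lambda:=-\Delta u_\lambda+\gamma_\lambda(u_\lambda)+\lambda u_\lambda+g-KT_\lambda(u_\lambda)\in C^0([0,T];H)$, and finally $\partial_t u_\lambda=A_\lambda^{-1}z_\lambda\in C^0([0,T];H)$. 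You instead re-derive the higher time regularity from scratch, by (formally) differentiating the equation in time, testing with $\partial_t u_\lambda$, and controlling the doubly nonlinear term via the Fenchel identity $\beta_\lambda(w)w-\widehat\beta_\lambda(w)=\widehat{\beta^{-1}_\lambda}(\beta_\lambda(w))\ge 0$; this is essentially a self-contained reproof of the part of the cited theorem that the paper uses, and indeed parallels the discrete-in-time argument the paper itself carries out later in Section~\ref{estimates} (where it is needed for the $\lambda$-independent version of the same bound). Your final passage to $C^1([0,T];H)$ is also phrased differently---via $A_\lambda\partial_tu_\lambda\in H^1(0,T;V^*)\cap L^2(0,T;V)\embed C^0([0,T];H)$ and then $A_\lambda^{-1}$---but it lands at the same place as the paper's $z_\lambda$-argument. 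The trade-off is the usual one: your version is more elementary and self-contained but longer and requires the difference-quotient/secondary-approximation work you only sketch, while the paper offloads exactly that work to the cited abstract theorem and keeps the lemma's proof to a clean bootstrap. Note also that the $\lambda$-independence of $\|\partial_t u_\lambda(0)\|_H$ which you assert, while true (it is the content of Lemma~\ref{lemma4}), is not needed at this stage---the lemma only requires regularity with $\lambda$-dependent norms.
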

\begin{proof}
  First of all, thanks to \eqref{g}--\eqref{u_0} and Lemma \ref{lemma2}, we can apply the result contained in 
  \cite[Thm.~2.2]{colli-visin},
  so that \pier{besides the} uniqueness of $u_\lambda$ we deduce  \pier{that $u_\lambda\in H^1(0,T; V)$ as well as 
$$A_\lambda \partial_t u_\lambda \in H^1(0,T; V^*) \cap L^\infty(0,T; H) .$$}%
Secondly, by definition of $G$ and the regularity that we have just proved for $u_\lambda$,
  we have  $\Delta\mu_\lambda=\partial_tu_\lambda\in L^2(0,T; V)$: hence, by elliptic regularity we also \pier{infer that}
  $\mu_\lambda \in L^2(0,T; \pier{V_0 \cap H^3(\Omega)})$.
  Moreover, with the information that we have so far, by \eqref{2app} and the definitions of $A_\lambda$ and $B_\lambda$,
  equation \eqref{eq2_approx} holds for almost every $t\in(0,T)$: here, 
  note that all the terms except $-\Delta u_\lambda$ are in $L^2(0,T; V)$, so that by difference and elliptic regularity, 
  we recover that $u_\lambda \in L^2(0,T; \pier{W\cap H^3(\Omega)})$. As a consequence,
  we have that $u_\lambda \in \pier{H^1(0,T; V) \cap L^2(0,T; \pier{W\cap H^3(\Omega)})}  \subseteq C^0([0,T];\pier{W})$.
  Furthermore, if we set 
  $z_\lambda:=-\Delta u_\lambda + \gamma_\lambda(u_\lambda) +\lambda u_\lambda+g-KT_\lambda \pier{(u_\lambda)}$,
  then it is now clear that $z_\lambda\in C^0([0,T]; H)$; since 
  $\partial_t u_\lambda=A_\lambda^{-1}z_\lambda$ and
  $A_\lambda^{-1}:H\rarr H$ is Lipschitz continuous, we recover also that
  $u_\lambda \in C^1([0,T]; H)$, so that \eqref{u_lambda} \pier{follows}.
  Finally, by definition of $\mu_\lambda$ and elliptic regularity we deduce that $\mu_\lambda \in C^0([0,T]; \pier{V_0 \cap H^2(\Omega)})$,
  and \eqref{mu_lambda} is proved. It is now immediate to check now that \eqref{eq1_approx}--\eqref{eq2_approx} hold
  for every $t\in[0,T]$.
\end{proof}


\section{Uniform estimates}
\setcounter{equation}{0}
\label{estimates}

This section is devoted to proving some uniform estimates on the approximated solutions,
independently of $\lambda \pier{{}\in (0,1)}$. Let us point out that the first two estimates do not 
involve the different hypoyheses \eqref{growth_psi} and \eqref{beta_sublinear}; on the other side,
the third estimate has to be performed only when assuming \eqref{growth_psi} and the fourth
when assuming \eqref{beta_sublinear}. \pier{Henceforth, for the integrals over space and time, it occurs that we use the shorter notation
$\int_{Q_t}$ in place of $\pier{\int_0^t \! \int_\Omega}\,$, for $t\in [0,T].$}

\subsection{The first estimate}

We \pier{sum and subtract $u_\lambda$ in \eqref{eq2_approx}, then} test equation \eqref{eq1_approx} by $\mu_\lambda$ \pier{and equation} \eqref{eq2_approx} by $\partial_t u_\lambda$\pier{. Next, we} take the difference
and integrate on $[0,t]$ for every $t\in[0,T]$: if $\widehat{\gamma_\lambda}:\Ar\rarr[0,+\infty)$ is the Moreau\pier{--Yosida}
regularization of $\widehat{\gamma}$, using the Young inequality we \pier{infer that}
\[
  \begin{split}
  &\int_{Q_t}|\partial_t u_\lambda|^2 + \int_{Q_t}|\nabla\mu_\lambda|^2 + \int_{Q_t}\beta_\lambda(\partial_t u_\lambda)\partial_t u_\lambda \\
  &+ \frac12\pier{{}\Vert u_\lambda(t)\Vert_V^2} + \int_\Omega\widehat{\gamma_\lambda}(u_\lambda(t))
  + \frac{\lambda}2\int_\Omega|u_\lambda(t)|^2 \\
  &\quad =\frac12\pier{{}\Vert u_0\Vert_V^2} + \int_\Omega\widehat{\gamma_\lambda}(u_0) + \frac{\lambda}2\int_\Omega|u_0|^2
  +\int_{Q_t}\left(\pier{u_\lambda +{}} KT_\lambda \pier{( u_\lambda)} - g\right)\partial_t u_\lambda\\
  &\quad\leq \pier{{}\Vert u_0\Vert_V^2} + \int_\Omega\widehat{\gamma}(u_0)   +\frac12\int_{Q_t}|\partial_t u_\lambda|^2 + \pier{{}2\l|g\r|^2_{L^2(0,T; H)} + 2\big(1+ K^2\big)\int_0^t  \Vert u_\lambda(r)\Vert_V^2dr }\,.
  \end{split}
\]
Hence, 
thanks to the hypotheses \eqref{g}--\eqref{u_0}, the monotonicity of $\beta_\lambda$,
the positivity of $\widehat{\gamma_\lambda}$, \pier{the Gronwall lemma and the Poincar\'e inequality} we deduce that\pier{, for every $\lambda\in(0,1)$,
\begin{gather}
  \label{est1}
  \l|\mu_\lambda\r|_{L^2(0,T; \piecol{V_0})} \leq c\,,\\
  \label{est2}
  \pier{\l|u_\lambda\r|_{H^1(0,T; H)\cap L^\infty (0,T;V)}} \leq c\,,\\
  \label{est4}
  \l|\widehat{\gamma_\lambda}(u_\lambda)\r|_{L^\infty(0,T; L^1(\Omega))} \leq c \,.
\end{gather}%
}

\subsection{The second estimate}
The idea of the second estimate is to formally test \eqref{eq1_approx}
by $\partial_t \mu_\lambda$, the time derivative of \eqref{eq2_approx} by $\partial_t u_\lambda$
and then to take the difference: however, the regularities of $\mu_\lambda$ and $u_\lambda$ do not allow 
us to do so.

Let us give a rigorous argument. The first step is to identify the initial values 
of the approximated solutions\pier{%
$$\mu_{0,\lambda}:=\mu_\lambda(0) \quad \hbox{and}  \quad u_{0,\lambda}':=\partial_tu_\lambda(0). $$ 
Note that we already have $u_\lambda(0)=u_0$.}
The natural idea is to require that
$(\mu_{0,\lambda}, u_0, u_{0, \lambda}')$ satisfy the elliptic system induced by \eqref{eq1_approx}--\eqref{eq2_approx}
in $t=0$: in this way, we can show that $\mu_{0,\lambda}$ and $u_{0,\lambda}'$ are 
\pier{well defined} and unique.
We collect these remarks in the following lemma.
\begin{lem}
  \label{lemma4}
  There exists a unique \pier{pair $(\mu_{\lambda_0}, u_{\lambda, 0}')\in (V_0\cap H^2(\Omega))\times H$} such that
  \[
  \begin{cases}
    u_{0, \lambda}'-\Delta\mu_{0, \lambda}=0\,,\\
    \mu_{0, \lambda} = u_{0, \lambda}' + \beta_\lambda(u_{0,\lambda}') 
    - \Delta u_0 + \gamma_\lambda(u_0) + \lambda u_0+g(0) - KT_\lambda \pier{(u_0)}\,.
  \end{cases}
  \]
  Moreover, there exists a positive constant $C$, independent of $\lambda$, such that
  \beq
    \label{est_init}
    \l|\mu_{0,\lambda}\r|_{V_0}+ \l|u_{0,\lambda}'\r|_H + 
    \l|\widehat{\beta_\lambda^{-1}}\left(\beta_\lambda(u_{0,\lambda}')\right)\r|_{L^1(\Omega)}
    \leq C
  \eeq
  for every $\lambda\in(0,1)$, where \pier{$\widehat{\beta_{\lambda}^{-1}}:
  \Ar\rarr[0,+\infty)$ represents the convex conjugate function of $\widehat{\beta_{\lambda}}$, i.e., the Moreau\pier{--Yosida} regularization of $\widehat{\beta}$.}
\end{lem}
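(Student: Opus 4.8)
The plan is to mimic, at the approximated level, the argument already carried out in Section~\ref{results} for the non-approximated initial values $(\mu_0,u_0,u_0')$. First I would set $z_{0,\lambda}:=-\Delta u_0+\gamma_\lambda(u_0)+\lambda u_0+g(0)-KT_\lambda(u_0)$ and observe that $z_{0,\lambda}\in H$, with $\|z_{0,\lambda}\|_H\le c$ uniformly in $\lambda$: indeed $u_0\in W$ so $\Delta u_0\in H$; by \eqref{yos2} and $(a,b)=\mathbb R$ resp.\ the bound $a_0\le u_0\le b_0$ we have $|\gamma_\lambda(u_0)|\le|\gamma(u_0)|\le c$ a.e.; $|\lambda u_0|\le|u_0|$; $|T_\lambda(u_0)|\le|u_0|$; and $g(0)\in H$ by \eqref{g} (using $g\in H^1(0,T;H)\hookrightarrow C^0([0,T];H)$). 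With this notation the second line of the system reads $\mu_{0,\lambda}-z_{0,\lambda}\in (I+\beta_\lambda)(u'_{0,\lambda})$, i.e.\ $u'_{0,\lambda}=(I+\beta_\lambda)^{-1}(\mu_{0,\lambda}-z_{0,\lambda})$; substituting into the first line gives the single elliptic equation
\[
  (I+\beta_\lambda)^{-1}(\mu_{0,\lambda}-z_{0,\lambda})-\Delta\mu_{0,\lambda}=0\quad\text{a.e. in }\Omega .
\]
Since $(I+\beta_\lambda)^{-1}:\mathbb R\to\mathbb R$ is monotone and Lipschitz (indeed $1$-Lipschitz) and $z_{0,\lambda}\in H$, the operator $\mu\mapsto -\Delta\mu+(I+\beta_\lambda)^{-1}(\mu-z_{0,\lambda})$ is maximal monotone and coercive from $V_0\cap H^2(\Omega)$, so this equation has a solution $\mu_{0,\lambda}\in V_0\cap H^2(\Omega)$, unique by the strong monotonicity of $-\Delta$ with domain $V_0\cap H^2(\Omega)$ (exactly as in the passage preceding Theorem~\ref{thm1}). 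Then $u'_{0,\lambda}:=(I+\beta_\lambda)^{-1}(\mu_{0,\lambda}-z_{0,\lambda})\in H$ and the pair solves the system; existence and uniqueness are thus settled.

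For the uniform bound \eqref{est_init}, the idea is to test the scalar (elliptic) equation against $\mu_{0,\lambda}$ and against $u'_{0,\lambda}$ and combine, reproducing the ``energy'' structure of the PDE. Concretely: multiply the first equation $u'_{0,\lambda}-\Delta\mu_{0,\lambda}=0$ by $\mu_{0,\lambda}$ and integrate, obtaining $\int_\Omega u'_{0,\lambda}\mu_{0,\lambda}+\int_\Omega|\nabla\mu_{0,\lambda}|^2=0$; multiply the second equation by $u'_{0,\lambda}$ and integrate, obtaining $\int_\Omega\mu_{0,\lambda}u'_{0,\lambda}=\int_\Omega|u'_{0,\lambda}|^2+\int_\Omega\beta_\lambda(u'_{0,\lambda})u'_{0,\lambda}+\int_\Omega z_{0,\lambda}u'_{0,\lambda}$. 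Subtracting eliminates the indefinite cross term $\int_\Omega\mu_{0,\lambda}u'_{0,\lambda}$ and leaves
\[
  \int_\Omega|\nabla\mu_{0,\lambda}|^2+\int_\Omega|u'_{0,\lambda}|^2+\int_\Omega\beta_\lambda(u'_{0,\lambda})\,u'_{0,\lambda}
  = -\int_\Omega z_{0,\lambda}\,u'_{0,\lambda}\le \tfrac12\|z_{0,\lambda}\|_H^2+\tfrac12\|u'_{0,\lambda}\|_H^2 .
\]
Absorbing $\tfrac12\|u'_{0,\lambda}\|_H^2$ on the left and using monotonicity of $\beta_\lambda$ with $\beta_\lambda(0)=0$ (so that $\beta_\lambda(u'_{0,\lambda})u'_{0,\lambda}\ge 0$) gives $\|\nabla\mu_{0,\lambda}\|_H^2+\|u'_{0,\lambda}\|_H^2\le\|z_{0,\lambda}\|_H^2\le c$; since $\mu_{0,\lambda}\in V_0$, Poincaré's inequality upgrades the gradient bound to $\|\mu_{0,\lambda}\|_{V_0}\le c$. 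This disposes of the first two terms in \eqref{est_init} uniformly in $\lambda$.

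The one genuinely delicate point is the last term $\|\widehat{\beta_\lambda^{-1}}(\beta_\lambda(u'_{0,\lambda}))\|_{L^1(\Omega)}$. Here I would use the Young–Fenchel identity $\widehat{\beta_\lambda}(r)+\widehat{\beta_\lambda^{-1}}(s)=rs$ valid whenever $s=\beta_\lambda(r)$ (recall $\widehat{\beta_\lambda^{-1}}=(\widehat{\beta_\lambda})^*$ and $\partial\widehat{\beta_\lambda}=\beta_\lambda$); applied pointwise with $r=u'_{0,\lambda}$ it yields $\widehat{\beta_\lambda^{-1}}(\beta_\lambda(u'_{0,\lambda}))=\beta_\lambda(u'_{0,\lambda})\,u'_{0,\lambda}-\widehat{\beta_\lambda}(u'_{0,\lambda})\le\beta_\lambda(u'_{0,\lambda})\,u'_{0,\lambda}$, since $\widehat{\beta_\lambda}\ge 0$ (as $\widehat{\beta}(0)=0$ and $\widehat{\beta}\ge 0$). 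Therefore $\|\widehat{\beta_\lambda^{-1}}(\beta_\lambda(u'_{0,\lambda}))\|_{L^1(\Omega)}\le\int_\Omega\beta_\lambda(u'_{0,\lambda})\,u'_{0,\lambda}$, and this integral was already bounded by $c$ in the displayed energy identity above (it sits on the left-hand side with a favorable sign). Hence all three quantities in \eqref{est_init} are controlled by a constant independent of $\lambda$, and the proof is complete. The main obstacle is precisely recognizing that the Moreau–Yosida conjugate term is estimated ``for free'' by the dissipation term $\int_\Omega\beta_\lambda(u'_{0,\lambda})u'_{0,\lambda}$ via the Fenchel equality, rather than by any separate argument; the rest is the routine elliptic existence/uniqueness and energy bookkeeping modeled on the computations already in the paper.
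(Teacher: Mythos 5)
Your proposal is correct and follows essentially the same route as the paper: existence and uniqueness by reducing to a single elliptic equation exactly as in the treatment of \eqref{system_init}, the uniform bound by testing the first equation by $\mu_{0,\lambda}$ and the second by $u_{0,\lambda}'$ and subtracting, and the control of $\widehat{\beta_\lambda^{-1}}\left(\beta_\lambda(u_{0,\lambda}')\right)$ via the Fenchel--Young identity $\widehat{\beta_\lambda^{-1}}\left(\beta_\lambda(r)\right)+\widehat{\beta_\lambda}(r)=\beta_\lambda(r)\,r$ together with $\widehat{\beta_\lambda}\geq 0$. The paper's proof is identical in substance, merely more condensed.
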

\begin{proof}
  The existence and uniqueness of $(\mu_{0,\lambda}, u_{0,\lambda})$ can be shown easily using the 
  hypotheses \eqref{g}--\eqref{u_0} as we did in the study of \eqref{system_init}. Moreover, testing the first equation
  by $\mu_{0,\lambda}$, the second by $u_{0,\lambda}'$ and taking the difference, thanks to \eqref{g}--\eqref{u_0} and the
  Young inequality we \pier{obtain}
  \[
    \begin{split}
    &\int_\Omega|\nabla\mu_{0,\lambda}|^2+\int_\Omega|u_{0,\lambda}'|^2+
    \int_\Omega\beta_\lambda(u_{0,\lambda}')u_{0,\lambda}'\\
    &\quad{}=\int_\Omega\left(\Delta u_0-\gamma_\lambda(u_0)-\lambda u_0-g(0)+KT_\lambda\pier{(u_0)}\right)u_{0,\lambda}'
    \leq \pier{c} + \frac12\int_\Omega|u_{0,\lambda}'|^2\,.
    \end{split}
  \]
\pier{Then,} in order to prove \eqref{est_init}, \pier{we start to observe that 
(cf., e.g., \cite{Barbu, Brezis}) that $\widehat{\beta_{\lambda}^{-1}}$ is a nonnegative function with minimum $0$ in $0$, such that its subdifferential $\partial\,\widehat{\beta_{\lambda}^{-1}}$ coincides with the inverse 
graph $\beta_{\lambda}^{-1}$  of the monotone and Lipschitz continuous function $\beta_\lambda $. Then, it is not difficult to check that}
  \[
  \widehat{\beta_\lambda^{-1}}\left(\beta_\lambda(u_{0,\lambda}')\right)\leq
  \widehat{\beta_\lambda^{-1}}\left(\beta_\lambda(u_{0,\lambda}')\right)+
  \pier{\widehat{\beta_\lambda}}\left(u_{0,\lambda}'\right) = \beta_\lambda(u_{0,\lambda}')u_{0,\lambda}'
  \]
  and the last quantity \pier{is nonnegative and has already been estimated in $L^1(\Omega)$}.
\end{proof}
\ele{Now, we proceed \pcol{with a somehow technical argumentation}, \pcol{by discretizing} the time interval and \pcol{rigorously performing} the estimate in the discrete time setting.} Fix $t\in[0,T]$: for any $n\in\En$, set $\tau_n:=\pier{t/n}$ and $t^i_n:=i\tau_n$
for $i=0,\ldots,n$. We know that \eqref{eq1_approx}--\eqref{eq2_approx} hold in $t^i_n$ for every $i=0,\ldots, n$.
Now, the idea is the following: we test
\eqref{eq1_approx} evaluated in $t^i_n$ by $\mu_\lambda(t^i_n)-\mu_\lambda(t_n^{i-1})$, 
\pier{as well as} the difference
between \eqref{eq2_approx} in $t^i_n$ and \eqref{eq2_approx} in $t^{i-1}_n$ by $\partial_t u_\lambda(t_n^i)$, 
and then we subtract the two quantities. We \pier{infer that}
\[
  \begin{split}
    \frac12&\int_\Omega|\nabla\mu_\lambda(t^i_n)|^2 - \frac12\int_\Omega|\nabla\mu_\lambda(t_n^{i-1})|^2
    +\frac12\int_\Omega|\nabla\mu_\lambda(t_n^i)-\nabla\mu_\lambda(t_n^{i-1})|^2\\
    &+\frac12\l|\partial_tu_\lambda(t_n^i)\r|^2_H - \frac12\l|\partial_tu_\lambda(t_n^{i-1})\r|^2_H
    +\frac12\l|\partial_tu_\lambda(t_n^i)-\partial_tu_\lambda(t_n^{i-1})\r|^2_H\\
    &+\int_\Omega\left(\beta_\lambda(\partial_t u_\lambda(t_n^i))-\beta_\lambda(\partial_t u_\lambda(t_n^{i-1}))\right)\partial_t u_\lambda(t_n^i)
    +\int_\Omega\left(\nabla u_\lambda(t_n^i)-\nabla u_\lambda(t_n^{i-1})\right)\cdot\nabla\partial_t u_\lambda(t_n^i)\\
    &+\int_\Omega\left(\gamma_\lambda(u_\lambda(t_n^i))-\gamma_\lambda(u_\lambda(t_n^{i-1}))\right)\partial_t u_\lambda(t_n^i)
    +\lambda\int_\Omega\left(u_\lambda(t_n^i)-u_\lambda(t_n^{i-1})\right)\partial_tu_\lambda(t_n^i)\\
    &=K\int_\Omega\left(T_\lambda \pier{(u_\lambda(t_n^i))}-T_\lambda \pier{(u_\lambda(t_n^{i-1}))}\right)\partial_t u_\lambda(t_n^i)
    -\int_\Omega\left(g(t_n^i)-g(t_n^{i-1})\right)\partial_t u_\lambda(t_n^i)\,.
  \end{split}
\]
\pier{Since $\partial\pier{\widehat{\beta_\lambda}}=\beta_\lambda$ and $\partial\widehat{\beta^{-1}_\lambda}=\beta^{-1}_\lambda$,
as a consequence} we have that
\[
  \left(\beta_\lambda(\partial_t u_\lambda(t_n^i))-\beta_\lambda(\partial_t u_\lambda(t_n^{i-1}))\right)\partial_t u_\lambda(t_n^i)
  \geq \widehat{\beta_\lambda^{-1}}\left(\beta_\lambda(\partial_t u_\lambda(t_n^i))\right)
  - \widehat{\beta_\lambda^{-1}}\left(\beta_\lambda(\partial_t u_\lambda(t_n^{i-1}))\right)\,.
\]
Hence, summing over $i=1,\ldots,n$ we obtain
  \begin{align*}
    &\frac12\int_\Omega|\nabla\mu_\lambda(t)|^2 +\frac12\l|\partial_tu_\lambda(t)\r|^2_H + 
    \int_\Omega\widehat{\beta_\lambda^{-1}}\left(\beta_\lambda(\partial_t u_\lambda(t))\right)\\
    &\quad{}+\sum_{i=1}^n\tau_n\int_\Omega\frac{\nabla u_\lambda(t_n^i)-\nabla u_\lambda(t_n^{i-1})}{\tau_n}\cdot\nabla\partial_t u_\lambda(t_n^i)\\
    &\quad{}+\sum_{i=1}^n\tau_n\int_\Omega\frac{\gamma_\lambda(u_\lambda(t_n^i))-\gamma_\lambda(u_\lambda(t_n^{i-1}))}{\tau_n}\partial_t u_\lambda(t_n^i)
    +\lambda\sum_{i=1}^n\tau_n\int_\Omega\frac{u_\lambda(t_n^i)-u_\lambda(t_n^{i-1})}{\tau_n}\partial_t u_\lambda(t_n^i)\\
    &\quad{}\leq \frac12\int_\Omega|\nabla\mu_{0,\lambda}|^2+\frac12\int_\Omega|u_{0,\lambda}'|^2+
    \int_\Omega\widehat{\beta_\lambda^{-1}}\left(\beta_\lambda(u_{0, \lambda}')\right)\\
    &\quad\quad{}+\sum_{i=1}^n\tau_n\int_\Omega\left(K\frac{T_\lambda \pier{(u_\lambda(t_n^i))} -T_\lambda \pier{(u_\lambda(t_n^{i-1}))}}{\tau_n}
    -\frac{g(t_n^i)-g(t_n^{i-1})}{\tau_n}\right)\partial_t u_\lambda(t_n^i)\,,
  \end{align*}
so that letting $n\rarr\infty$ we deduce that
\[
  \begin{split}
    \frac12&\int_\Omega|\nabla\mu_\lambda(t)|^2 +\frac12\l|\partial_tu_\lambda(t)\r|^2_H + 
    \int_\Omega\widehat{\beta_\lambda^{-1}}\left(\beta_\lambda(\partial_t u_\lambda(t))\right)\\
    &+\int_{Q_t}|\nabla\partial_t u_\lambda|^2+ \int_{Q_t}\gamma_\lambda'(u_\lambda)|\partial_tu_\lambda|^2
    +\lambda\int_{Q_t}|\partial_t u_\lambda|^2 \\
    &\leq\frac12\int_\Omega|\nabla\mu_{0,\lambda}|^2+\frac12\int_\Omega|u_{0,\lambda}'|^2+
    \int_\Omega\widehat{\beta_\lambda^{-1}}\left(\beta_\lambda(u_{0, \lambda}')\right)
    +\int_{Q_t}\left(\pier{K\, T'_\lambda (u_\lambda) \partial_t u_\lambda} -\partial_t g\right)\partial_t u_\lambda\,.
  \end{split}
\]
Now, since $|T_\lambda'|\leq 1$ and thanks to the estimate \eqref{est_init}, using the positivity of $\gamma_\lambda'$
and hypothesis \eqref{g} we \pier{infer} that
\[
  \begin{split}
  \frac12&\int_\Omega|\nabla\mu_\lambda(t)|^2 +\frac12\l|\partial_tu_\lambda(t)\r|^2_H + 
    \int_\Omega\widehat{\beta_\lambda^{-1}}\left(\beta_\lambda(\partial_t u_\lambda(t))\right)\\
    &{}+
    \int_{Q_t}|\nabla\partial_t u_\lambda|^2+\lambda\int_{Q_t}|\partial_t u_\lambda|^2
    \leq \pier{c}\left(1+\int_{Q_t}|\partial_t u_\lambda|^2\right).
  \end{split}
\]
Hence, by \piecol{\eqref{est2}} we deduce the following estimates\pier{, for every $\lambda \in (0,1)$,}
\begin{gather}
  \label{est6}
  \l|\mu_\lambda\r|_{L^\infty(0,T; \piecol{V_0})}\leq \pier{c}\,,\\
  \label{est7}
  \l|\partial_t u_\lambda\r|_{L^\infty(0,T; H)\cap L^2(0,T;V)} \leq \pier{c}\,,\\
  \label{est8}
  \l|\widehat{\beta_\lambda^{-1}}\left(\beta_\lambda(\partial_t u_\lambda)\right)\r|_{L^\infty(0,T; L^1(\Omega))} \leq \pier{c}\,.
\end{gather}
Finally, \pier{by virtue of \eqref{est7} and the elliptic regularity theory, 
from \eqref{eq1_approx} it follows that}
\begin{gather}
  \label{est7''}
  \l|\mu_\lambda\r|_{L^\infty(0,T; H^2(\Omega))\cap L^2(0,T; H^3(\Omega))}\leq \pier{c}\,.
\end{gather}

\subsection{\pcol{Further estimates} \ele{in the case of} growth hypothesis on $\psi''$}

\pier{Now, we work under the assumption \eqref{growth_psi}. We start proving an auxiliary result \pcol{which ensures that $|\gamma_\lambda'(r)| $ is uniformly bounded by a power function like the one in \eqref{growth_psi}} \ele{(\pcol{but} see the subsequent Remark \ref{potenzar}).}
\begin{lem}
  \label{lemma5}
Assume \eqref{psi1}--\eqref{psi5}, \eqref{gamma}--\eqref{gamma_hat}, \eqref{growth_psi} and, for every $\lambda \in (0,1)$, let $\gamma_\lambda$ be the Yosida approximation of $\gamma$. Then, there exists a constant $C$ independent of $\lambda$, such that 
\beq
\label{pier1}
|\gamma_\lambda' (r) | \leq C\big(1+ |r|^5\big) \quad \forall \, r\in \Ar .
\eeq    
\end{lem}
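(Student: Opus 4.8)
The plan is to use the explicit description of the Yosida approximation via the resolvent $J_\lambda:=(I+\lambda\gamma)^{-1}$, for which $\gamma_\lambda=\gamma\circ J_\lambda$. By \eqref{psi2} and \eqref{gamma} (recall that here $(a,b)=\Ar$ by \eqref{growth_psi}) the function $\gamma$ is of class $C^1$ and increasing, with $\gamma'=\psi''+K\ge 0$ thanks to \eqref{psi5}; hence $I+\lambda\gamma$ is a $C^1$ diffeomorphism of $\Ar$ onto $\Ar$ whose derivative $1+\lambda\gamma'$ is everywhere $\ge 1$. Consequently $J_\lambda\in C^1(\Ar)$, $\gamma_\lambda\in C^1(\Ar)$, and differentiating the identity $J_\lambda+\lambda\,\gamma\circ J_\lambda=\mathrm{id}$ gives $J_\lambda'=(1+\lambda\,\gamma'\!\circ J_\lambda)^{-1}\in(0,1]$, so that by the chain rule
\[
 0\le\gamma_\lambda'(r)=\frac{\gamma'(J_\lambda(r))}{1+\lambda\,\gamma'(J_\lambda(r))}\le\gamma'(J_\lambda(r))\qquad\forall\,r\in\Ar .
\]

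Next I would feed in the growth hypothesis together with a uniform bound for $J_\lambda$. From $\gamma'=\psi''+K$, \eqref{psi5} and \eqref{growth_psi} one gets $0\le\gamma'(s)\le M(1+|s|^5)+K$ for all $s\in\Ar$, so it only remains to control $J_\lambda(r)$ independently of $\lambda$. For this I would use that $J_\lambda$ is nonexpansive and fixes the point $x_0$ with $\gamma(x_0)=0$ introduced before \eqref{gamma_hat} (indeed $(I+\lambda\gamma)(x_0)=x_0$ forces $J_\lambda(x_0)=x_0$); therefore $|J_\lambda(r)|\le|x_0|+|J_\lambda(r)-J_\lambda(x_0)|\le 2|x_0|+|r|$ and, by an elementary manipulation of the fifth power, $|J_\lambda(r)|^5\le c\,(1+|r|^5)$ with $c$ depending only on $|x_0|$, hence only on the data.

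Putting the two displays together yields $|\gamma_\lambda'(r)|=\gamma_\lambda'(r)\le M\bigl(1+(2|x_0|+|r|)^5\bigr)+K\le C(1+|r|^5)$ with $C$ independent of $\lambda$, which is exactly \eqref{pier1}. I do not foresee a real obstacle here: the one point deserving a line of care is the legitimacy of the chain-rule computation, which is secured by the $C^1$ regularity of $\gamma$ and the strict positivity of $(I+\lambda\gamma)'$; the uniformity in $\lambda$ then comes for free from the $1$-Lipschitz continuity of $J_\lambda$ and from $\lambda\,\gamma'(J_\lambda(r))\ge 0$.
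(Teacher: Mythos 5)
Your proof is correct and follows essentially the same route as the paper: write $\gamma_\lambda=\gamma\circ J_\lambda$, differentiate, bound $\gamma'(J_\lambda(r))$ through \eqref{growth_psi}, and reduce to a uniform linear bound $|J_\lambda(r)|\le c(1+|r|)$. The only cosmetic difference is how you obtain that linear bound — you invoke nonexpansivity of $J_\lambda$ together with its fixed point $x_0$, while the paper multiplies the resolvent identity $r_\lambda+\lambda\gamma(r_\lambda)=r$ by $r_\lambda-x_0$ and uses monotonicity — and your explicit chain-rule factor $(1+\lambda\gamma'\circ J_\lambda)^{-1}\le 1$ is actually a little more careful than the paper's corresponding line.
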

\begin{proof}
It is well known that (see, e.g., \cite{Barbu,Brezis}) for $r\in
\Ar$ we have $\gamma_\lambda (r) = (r- r_\lambda)/\lambda$, where 
$ r_\lambda $ uniquely solves the equation $r_\lambda + \lambda 
\gamma (  r_\lambda)= r$. Then, we observe that $\gamma_\lambda (r) = 
\gamma (  r_\lambda)$, whence differentiating and applying 
\eqref{gamma} and \eqref{growth_psi} yield 
$$|\gamma_\lambda' (r) |= |\gamma' (r_\lambda) | \leq M\big(1+ |r_\lambda |^5\big)+ K. $$ 
On the other hand, we multiply $r_\lambda + \lambda 
\gamma (  r_\lambda)= r$ by $r_\lambda - x_0 $, where $x_0$ in 
\eqref{gamma_hat} is such that $0 = \gamma (x_0) $. The, using the monotonicity of $\gamma$ we find out that 
$$ | r_\lambda |^2 \leq x_0  r_\lambda + |r| | r_\lambda | + 
|r| |x_0| .$$
Hence, with the help of Young inequality and rearranging, it is straightforward to
conclude that $ | r_\lambda | \leq c(1+ |r|)$, which entails \eqref{pier1}. 
\end{proof}
\luca{Next, we perform our estimate.} Owing to Lemma~\ref{approx_sol}, we can take the gradient of
\eqref{eq2_approx} tested by $\nabla\partial_t u_\lambda$, sum with 
\eqref{eq2_approx} tested by $\partial_t\gamma_\lambda(u_\lambda)$}
and integrate on $[0,t]$ for any given $t\in[0,T]$:
we obtain
\[
  \begin{split}
  \int_{Q_t}&|\nabla\partial_tu_\lambda|^2 +\luca{\int_{Q_t}\pier{\gamma'_\lambda}(u_\lambda)|\partial_tu_\lambda|^2}+
  \int_{Q_t}\beta_\lambda'(\partial_tu_\lambda)|\nabla\partial_tu_\lambda|^2+
  \luca{\int_{Q_t}\pier{\gamma'_\lambda (u_\lambda)
  \partial_t u_\lambda}\beta_\lambda(\partial_t u_\lambda)}\\
  &+\luca{\frac12\int_\Omega|\Delta u_\lambda(t)|^2
  \pier{{}-\int_{Q_t}\Delta u_\lambda \partial_t \gamma_\lambda(u_\lambda)
  +\int_{Q_t}\nabla\gamma_\lambda(u_\lambda)\cdot\nabla\partial_t u_\lambda  +   
  \frac12\int_\Omega|\gamma_\lambda(u_\lambda(t))|^2}}
    \\
  &= \frac12\luca{\int_\Omega|\Delta u_0|^2+\frac12\int_\Omega|\gamma_\lambda(u_0)|^2}
  +\int_{Q_t} \pier{\nabla\left(KT_\lambda(u_\lambda)\pier{{}- \lambda u_\lambda} +\mu_\lambda-g\right) }
  \cdot\nabla\partial_tu_\lambda \\
  &\quad{}+\luca{\int_{Q_t}\left(KT_\lambda(u_\lambda)\pier{{}- \lambda u_\lambda}
  +\mu_\lambda-g\right)  \partial_t\gamma_\lambda(u_\lambda)}\,.
  \end{split}
\]
\luca{Now, since $\partial_t u_\lambda\in L^2(0,T; V)$, we have $-\Delta \partial_tu_\lambda\in L^2(0,T; V^*)$,
so that integration by parts yields
\[
  \begin{split}
  \int_{Q_t}\nabla\gamma_\lambda(u_\lambda)\cdot\nabla\partial_t u_\lambda&=
  \int_0^t\left<-\Delta\partial_t u_\lambda(s), \gamma_\lambda(u_\lambda(s))\right>_{V^*,V}\,ds\\
  &=\int_{Q_t}\Delta u_\lambda \partial_t\gamma_\lambda(u_\lambda)
  -\int_\Omega\Delta u_\lambda(t)\gamma_\lambda(u_\lambda(t))+\int_\Omega\Delta u_0\gamma_\lambda(u_0)\,.
  \end{split}
\]
}\pier{Therefore, we can substitute in the previous relation and use the elementary fact that 
$$ \frac12 r^2 +  \frac12 s^2 -rs = \frac12 (r-s)^2 \quad \forall
\,r,s\in \Ar. $$
Then, by virtue of the monotonicity of $\gamma_\lambda$ and $\beta_\lambda$ along with the property $\beta_\lambda (0) = 0$ (cf.~\eqref{beta}), we deduce that 
\begin{align}
  \int_{Q_t}&|\nabla\partial_tu_\lambda|^2 
 +\frac12\int_\Omega|- \Delta u_\lambda(t)+ 
 \gamma_\lambda(u_\lambda(t))|^2 \nonumber \\
  &\leq \frac12\int_\Omega|- \Delta u_0 + \gamma_\lambda(u_0)|^2 
  +\int_{Q_t} \pier{\nabla \mu_\lambda }
  \cdot\nabla\partial_tu_\lambda \nonumber \\
  &\quad {}
 - \int_0^t\left<-\Delta\partial_t u_\lambda(s), 
 \left(KT_\lambda(u_\lambda)\pier{{}- \lambda u_\lambda}-g\right) (s)\right>_{V^*,V}\,ds \nonumber \\
  &\quad{}+\int_{Q_t}\mu_\lambda  \partial_t\gamma_\lambda(u_\lambda)  +\int_0^t\!\!\int_\Omega \partial_t\gamma_\lambda(u_\lambda(s))(KT_\lambda(u_\lambda)\pier{{}- \lambda u_\lambda}
 -g)(s)ds \,.  \label{pier2}
\end{align}
On account of \eqref{u_0}, \eqref{gamma},  \eqref{yos2} and \eqref{est6}--\eqref{est7}, we have that
\beq
\frac12\int_\Omega|- \Delta u_0 + \gamma_\lambda(u_0)|^2 
  +\int_{Q_t} \pier{\nabla \mu_\lambda }
  \cdot\nabla\partial_tu_\lambda \leq c. \label{pier3}
\eeq
On the other hand, we integrate by parts the third and fifth integral on the right-hand side of \eqref{pier2}. Thanks to 
\eqref{g}--\eqref{u_0}, the fact that $|T_\lambda'| \leq1$, the estimate \eqref{est2} and the Young inequality, we
obtain 
\begin{align}
 &- \int_0^t\left<-\Delta\partial_t u_\lambda(s), 
 \left(KT_\lambda(u_\lambda)\pier{{}- \lambda u_\lambda}-g\right) (s)\right>_{V^*,V}\,ds \nonumber \\
 &{}+\int_0^t\!\!\int_\Omega \partial_t\gamma_\lambda(u_\lambda(s))(KT_\lambda(u_\lambda)\pier{{}- \lambda u_\lambda}
 -g)(s)ds \nonumber \\
 &{}= \int_\Omega  (- \Delta u_\lambda(t) +  \gamma_\lambda(u_\lambda(t)))(KT_\lambda(u_\lambda)\pier{{}- \lambda u_\lambda} -g)(t)
 \nonumber \\
 &{}\quad - \int_\Omega  (- \Delta u_0 + \gamma_\lambda(u_0))(KT_\lambda(u_0)\pier{{}- \lambda u_0} -g(0))
 \nonumber \\
 &{}\quad  
 - \int_0^t\!\!\int_\Omega  (- \Delta u_\lambda +  \gamma_\lambda(u_\lambda))( KT'_\lambda(u_\lambda) \partial_t u_\lambda
 - \lambda \partial_t u_\lambda  -\partial_t g)
  \nonumber \\
 &{}\leq  \frac14\int_\Omega|- \Delta u_\lambda(t) +  \gamma_\lambda(u_\lambda(t))|^2 
+ \frac14  \int_0^t\!\!\int_\Omega  |- \Delta u_\lambda +  \gamma_\lambda(u_\lambda))|^2 + c. \label{pier4}
\end{align}
Finally, since $H^2(\Omega)\embed L^\infty(\Omega)$ and $V\embed L^6(\Omega)$ with continuous embeddings,
using the H\"older inequality we infer that
\ele{\begin{align}
  \int_{Q_t}\mu_\lambda\partial_t\gamma_\lambda(u_\lambda)&\leq
  \int_0^t\l|\mu_\lambda(s)\r|_{L^\infty(\Omega)}
  \l|\gamma_\lambda'(u_\lambda(s))\r|_{L^{6/5}(\Omega)}\l|\partial_tu_\lambda(s)\r|_{L^6(\Omega)}\,ds  \nonumber \\
  &\leq C\l|\mu_\lambda\r|_{L^\infty(0,T; H^2(\Omega))}
  \l|\gamma_\lambda'(u_\lambda)\r|_{L^2(0,T;L^{6/5}(\Omega))}
  \l|\partial_t u_\lambda\r|_{L^2(0,T; V)}. \label{Ele1}
\end{align}
Hence,} as Lemma~\ref{lemma5} implies that $|\gamma'(u_\lambda)|^{6/5}\leq c \big( 1+|u_\lambda|^6 \big) , $
in view of  the estimates \pier{\eqref{est2}, \eqref{est7} and \eqref{est7''}} we deduce a global bound independent of $\lambda$:
\beq
  \int_{Q_t}\mu_\lambda\partial_t\gamma_\lambda(u_\lambda) \leq c.
\label{pier5} 
\eeq
At this point, combining \eqref{pier2} with \eqref{pier3}--\eqref{pier5}, then applying the Gronwall lemma we find out that 
\beq 
 \| - \Delta u_\lambda +  \gamma_\lambda(u_\lambda))\|_{L^\infty(0,T; H)} \leq c. \label {pier6}
\eeq 
Now, a standard procedure (test of $- \Delta u_\lambda +  \gamma_\lambda(u_\lambda))=:f_\lambda $ by $- \Delta u_\lambda $
and integration by parts in the integral with $\gamma_\lambda(u_\lambda)$ by exploiting the monotonicity of $\gamma_\lambda$) leads us to the estimates 
\begin{gather}
  \label{est11}
  \l|\Delta u_\lambda\r|_{L^\infty(0,T;H)}\leq c\,,\\
  \label{est12}
  \l|\gamma_\lambda(u_\lambda)\r|_{L^\infty(0,T; H)}\leq c\, , 
\end{gather}
whence (cf.~\eqref{est2}) 
\beq 
 \|  u_\lambda \|_{L^\infty(0,T; W)} \leq c \label{pier7}
\eeq 
and, by comparison in \eqref{eq2_approx}, also that
\beq
  \label{est14}
   \l|\beta_\lambda(\partial_t u_\lambda)\r|_{L^\infty(0,T; H)}\leq c\,.
\eeq}
\ele{%
\begin{rmk}\label{potenzar}
Let us point out that we can get \eqref{pier5} after proving \eqref{pier1}. In particular, we are able to estimate $|\gamma_\lambda'(u_\lambda)|^{6/5}$ on the right hand side of \eqref{Ele1} in terms of 
$|u_\lambda|^6$ and then \pcol{obtain} the required estimate due to the fact that $u_\lambda$ \pcol{is bounded in $L^\infty(0,T;L^6(\Omega))$ (cf.~\eqref{est2}).} 
It is clear that this last result is related to the continuous embedding  $\pcol{V}\subset L^p(\Omega)$ for $1\leq p\leq 6$ holding in dimension 3. In the case of $\Omega\subseteq \Ar^2$, \pcol{\eqref{est2}} provides a bound of $\|u_\lambda\|_{L^\infty(0,T;L^p(\Omega))}$ for any $p<+\infty$, and thus we could \pcol{weaken} \eqref{pier1} and consequently also~\eqref{growth_psi}.                                                                                           
\end{rmk}%
}

\subsection{\pcol{Further estimates} \ele{in the case of} growth hypothesis on $\beta$}

\gius{In this subsection we drop the growth assumption on \pcol{$\psi''$} stated in \eqref{growth_psi} and replace it with the hypothesis that}{} $\beta$ is sublinear \pier{(cf.~\eqref{beta_sublinear})}\gius{.}{} \gius{We are going to show that in this case it is still possible to obtain the estimates \eqref{est11}-\eqref{est14}.}{}

\gius{As a start, we notice that the uniform bound \eqref{est14}}{, it} follows \gius{from \eqref{beta_sublinear},}{} \eqref{yos1}, and \eqref{est7}\gius{.}{that
\beq
  \label{est15}
  \l|\beta_\lambda(\partial_t u_\lambda)\r|_{L^\infty(0,T; H)}\leq c\, .
\eeq}
Hence, testing \eqref{eq2_approx} by $-\Delta u_\lambda (t) $ for any given 
$t\in[0,T]$ we infer that
\[
  \begin{split}
   &\int_\Omega|\Delta u_\lambda(t)|^2 +\int_{\Omega}\gamma'_\lambda(u_\lambda (t))|\nabla u_\lambda (t) |^2+\lambda\int_{\Omega}|\nabla u_\lambda (t) |^2\\
    &= \int_{\Omega}(\partial_t   u_\lambda + \beta_\lambda(\partial_t u_\lambda) +g - KT_\lambda \pier{(u_\lambda)} - \mu_\lambda)(t) \Delta u_\lambda(t)\,,
  \end{split}
\]
from which, using the Young inequality, \eqref{g} and the estimates \eqref{est2}, \eqref{est6}, \eqref{est7} we deduce \gius{\eqref{est11}.}{that 
\begin{gather}
  \label{est16}
  \l|\Delta u_\lambda\r|_{L^\infty (0,T; H)}\leq c\,. 
\end{gather}}
Then, by comparison in \eqref{eq2_approx}, we \gius{arrive at \eqref{est12}.}{have that
\beq
  \label{est18}
  \l|\gamma_\lambda(u_\lambda)\r|_{L^\infty (0,T; H)}\leq c\,.
\eeq }
We also point out that \eqref{est2}, \gius{\eqref{est11}}{\eqref{est16}} 
and the elliptic regularity theory imply \gius{\eqref{pier7}.}{that
\beq 
 \|  u_\lambda \|_{L^\infty(0,T; W)} \leq c \label{pier8}.
\eeq%
}
 
\gius{}{
\subsection{The fourth estimate: \ele{in the case of} growth hypothesis on $\beta$}

Since $\beta$ is sublinear \pier{(cf.~\eqref{beta_sublinear}), it follows from \eqref{yos1} and \eqref{est7} that
\beq
  \label{est15}
  \l|\beta_\lambda(\partial_t u_\lambda)\r|_{L^\infty(0,T; H)}\leq c\, .
\eeq
Hence, testing \eqref{eq2_approx} by $-\Delta u_\lambda (t) $ for any given $t\in[0,T]$ we infer that
\[
  \begin{split}
   &\int_\Omega|\Delta u_\lambda(t)|^2 +\int_{\Omega}\gamma'_\lambda(u_\lambda (t))|\nabla u_\lambda (t) |^2+\lambda\int_{\Omega}|\nabla u_\lambda (t) |^2\\
    &= \int_{\Omega }(\partial_t   u_\lambda + \beta_\lambda(\partial_t u_\lambda) +g - KT_\lambda \pier{(u_\lambda)} - \mu_\lambda)(t) \Delta u_\lambda(t)\,,
  \end{split}
\]
from which, using the Young inequality, \eqref{g} and the estimates \eqref{est2}, \eqref{est6}, \eqref{est7} we deduce that 
\begin{gather}
  \label{est16}
  \l|\Delta u_\lambda\r|_{L^\infty (0,T; H)}\leq c\,. 
\end{gather}
Then, by comparison in \eqref{eq2_approx}, we have that
\beq
  \label{est18}
  \l|\gamma_\lambda(u_\lambda)\r|_{L^\infty (0,T; H)}\leq c\,.
\eeq
We also point out that \eqref{est2}, \eqref{est16} and the elliptic regularity theory imply that
\beq 
 \|  u_\lambda \|_{L^\infty(0,T; W)} \leq c \label{pier8}.
\eeq%
} 
}


\section{The passage to the limit}
\setcounter{equation}{0}
\label{limit}

In this section we pass to the limit as $\lambda\searrow0$ in the approximated problem and obtain a solution
\pier{$(u,\mu,\xi)$  to \eqref{u}--\eqref{3}}.

Owing to the estimates \eqref{est1}--\eqref{est2}, \eqref{est6}--\eqref{est7''}, \eqref{est11}--\eqref{est14},
we infer that there exist
\begin{gather}
  \label{u_lim}
  u\in \pier{W^{1,\infty}(0,T; H)\cap H^1(0,T; V)\cap L^\infty(0,T; \pier{W})}\,,\\
  \label{mu_lim}
  \mu\in L^\infty(0,T; V_0\cap H^2(\Omega))\cap L^2(0,T; H^3(\Omega))\,,\\
  \label{xi_eta_lim}
  \xi\in L^\infty(0,T; H)\,, \qquad \eta\in L^\infty(0,T; H)
\end{gather}
such that the following convergences hold as $\lambda\searrow0$
(along a subsequence, which we still denote by $\lambda$
for simplicity):
\begin{gather}
  \label{conv1}
  u_\lambda\weakstar u \quad\text{in } \pier{W^{1,\infty}(0,T; H)  \cap L^\infty(0,T; W)}\,,\qquad
  u_\lambda\rarrw u \quad\text{in } H^1(0,T; V)\,,\\
  \label{conv2}
  \mu_\lambda\weakstar\mu \quad\text{in } L^\infty(0,T; V_0\cap H^2(\Omega))\,, \qquad
  \mu_\lambda\rarrw\mu \quad\text{in } L^2(0,T; H^3(\Omega))\,,\\
  \label{conv3}
  \beta_\lambda(\partial_t u_\lambda)\weakstar\xi \quad\text{in } L^\infty(0,T; H)\,,\\
  \label{conv4}
  \gamma_\lambda(u_\lambda) \weakstar\eta \quad\text{in } L^\infty(0,T; H)\,,\\
  \label{conv5}
  \lambda u_\lambda\rarr0 \quad\text{in } \pier{W^{1,\infty}(0,T; H)\cap H^1(0,T; V)\cap L^\infty(0,T; \pier{W})}\,.
\end{gather}
\pier{From \eqref{conv1} and a well-known compactness result (see \cite[Cor.~4, p.~85]{simon}) it follows that
\beq
  \label{conv6}
  u_\lambda\rarr u \quad\text{in } C^0([0,T]; V \cap C^0 (\,\overline{\Omega}\,)) 
\eeq
and consequently in $C^0 \left(\overline{Q}\right)$, of course. 
Moreover, recalling \eqref{trunc} it is a standard matter to check that 
\beq
  \label{conv7}
  T_\lambda \pier{(u_\lambda)} \rarr u \quad\text{in } C^0 \left(\overline{Q}\right) \,.
\eeq
Now,} passing to the weak limit in \eqref{eq1_approx}--\eqref{eq2_approx}, we deduce that 
\beq
  \label{lim}
 \pier{ \partial_t u - \Delta\mu=0\,,\quad
  \mu=\partial_t u + \xi - \Delta u + \eta + g - Ku \quad \hbox{a.e. in } \, (0,T). } 
\eeq
Furthermore, the convergences \eqref{conv4} and \eqref{conv6} together with the 
strong-weak closure of the maximal monotone operator $\gamma$ ensure that $\eta=\gamma(u)$.
Consequently, recalling the definition \eqref{gamma} of $\gamma$, 
the second equation \pier{in \eqref{lim} becomes $\mu=\partial_t u + \xi - \Delta u + \psi'(u) + g$ almost everywhere in $(0,T)$.
Moreover, the initial condition \eqref{3} follows from \eqref{approx2} and \eqref{conv6}.}

\pier{The last thing that we have to prove is that $\xi\in\beta(\partial_t u)$ almost everywhere in $Q$. To this aim,
as we did at the beginning of Section \ref{estimates}, we test \eqref{eq1_approx} by $\mu_\lambda$,
\eqref{eq2_approx} by $\partial_t u_\lambda$, take the difference and integrate on $[0,T]$:
using the fact that $\widehat{\gamma_\lambda}\leq \widehat{\gamma}$ \pier{for all $\lambda \in (0,1)$} we obtain
\begin{align}
  & \int_Q|\nabla\mu_\lambda|^2+ \int_Q|\partial_t u_\lambda|^2 +
  \int_Q\beta_\lambda(\partial_t u_\lambda)\partial_tu_\lambda +\frac12\int_\Omega|\nabla u_\lambda(T)|^2 + \int_\Omega\widehat{\gamma_\lambda}(u_\lambda(T))   \nonumber \\
  &=\frac12\int_\Omega|\nabla u_0|^2 + \int_\Omega\widehat{\gamma_\lambda}(u_0) + \int_Q\left(KT_\lambda \pier{(u_\lambda)}- \lambda u_\lambda  - g\right)\partial_t u_\lambda \nonumber \\
  &\leq\frac12\int_\Omega|\nabla u_0|^2 + \int_\Omega\widehat{\gamma}(u_0)+ \int_Q\left(KT_\lambda \pier{(u_\lambda)}- \lambda u_\lambda -g\right)\partial_t u_\lambda\,. \label{pier9}
\end{align}
Now, we claim that
\beq 
\int_\Omega\widehat{\gamma_\lambda}(u_\lambda(T)) \to \int_\Omega\widehat{\gamma}(u(T))  \quad \hbox{as }  \, 
\lambda \searrow 0 .  \label{pier10}
\eeq
Indeed, this is a consequence of \eqref{conv6} and \cite[Prop~2.11, p.~39]{Brezis} provided  that  $ \widehat{\gamma}(u(T))
\in L^1(\Omega)$: now, since $u\in H^1(0,T;H)$ and $\eta=\gamma(u) \in L^2(0,T;H) $ it turns out that (see~\cite[Lemme~3.3, p.~73]{Brezis})
\begin{align*} 
&\hbox{the function } \  t\mapsto \int_\Omega  \widehat{\gamma}(u(t)) \ \hbox{ is absolutely continuous in $[0,T]$} \\
&\hbox{and its derivative equals }  \ (\eta(t) , \partial_t u(t) ) \ \hbox{ for a.e. } t\in (0,T),
\end {align*}
whence \eqref{pier10} follows}

\pier{Owing to \eqref{conv1}--\eqref{conv7}, the weak lower semicontinuity of the norms and \eqref{pier10},
from \eqref{pier9} we deduce that 
\begin{align}
  &\limsup_{\lambda\searrow0}\int_Q\beta_\lambda(\partial_t u_\lambda)\partial_t u_\lambda \nonumber \\
  &\leq
  \frac12\int_\Omega|\nabla u_0|^2 + \int_\Omega\widehat{\gamma}(u_0) + \int_Q\left(Ku-g\right)\partial_t u\nonumber\\
  &\quad -\liminf_{\lambda\searrow0}\left[\int_Q|\nabla\mu_\lambda|^2+ \int_Q|\partial_t u_\lambda|^2+
  \frac12\int_\Omega|\nabla u_{\lambda}(T)|^2+\int_\Omega\widehat{\gamma_\lambda}(u_\lambda(T))\right]\nonumber\\
  &\leq \frac12\int_\Omega|\nabla u_0|^2 + \int_\Omega\widehat{\gamma}(u_0) + \int_Q\left(Ku-g\right)\partial_t u 
  \nonumber\\
  &\quad -\int_Q|\nabla\mu|^2 -\int_Q|\partial_t u|^2-\frac12\int_\Omega|\nabla u(T)|^2
  -\int_\Omega\widehat{\gamma}(u(T))\,. \label{pier11}
\end{align}
Now, if we test the first equation of \eqref{lim} by $\mu$, the second by $\partial_t u$, take the difference and integrate
on $[0,T]$, in a similar way as before we arrive at
\begin{align}
  &\int_Q|\nabla\mu|^2+ \int_Q|\partial_t u|^2 +
  \int_Q\xi\, \partial_tu
  +\frac12\int_\Omega|\nabla u(T)|^2 + \int_\Omega\widehat{\gamma}(u(T))\nonumber \\
  &=\frac12\int_\Omega|\nabla u_0|^2 + \int_\Omega\widehat{\gamma}(u_0)
  + \int_Q\left(K u-g\right)\partial_t u\,. \label{pier12}
\end{align}
Therefore, \eqref{pier11} and \eqref{pier12} allow us to infer that
\[
  \limsup_{\lambda\searrow0}\int_Q\beta_\lambda(\partial_t u_\lambda)\partial_t u_\lambda \leq
  \int_Q\xi\, \partial_tu\,.
\]
This condition together with the weak convergences \eqref{conv1} and \eqref{conv3}
ensures that $\xi\in\beta(\partial_t u)$ almost everywhere in $Q$, thanks to a standard result for 
maximal monotone operators (see, e.g., \cite[Prop.~1.1, p.~42]{Barbu}).  By this, we conclude the proofs of 
Theorems~\ref{thm1} and~\ref{thm1bis}.}


\section{Continuous dependence and uniqueness}
\label{cont_dep}

\pier{In this section we prove Theorem~\ref{contdep}. Let us write \eqref{incl}--\eqref{3} for both the sets of 
data $(g_i, u_{0,i} )$ and the corresponding solutions  $(u_i, \mu_i, \xi_i)$, $i=1,2$; then take the differences. 
Let us point out that, within this section, we use the notations $u=u_1 - u_2 $,  $\mu=\mu_1 - \mu_2 $,  $\xi =\xi_1 - \xi_2 $
and  $g=g_1 - g_2 $,  $u_0=u_{0,1} - u_{0,2} $. }

\pier{Now, in the difference of equations \eqref{2} we add and subtract $u$; then, we test the difference of 
equations~\eqref{1} by $\mu $  and  the difference of equations \eqref{2}  by $\partial_t u$; subsequently, we
subtract and integrate with respect to $t$. It is straightforward to obtain 
\begin{align}
  &\int_{Q_t}|\nabla\mu|^2+ \int_{Q_t}|\partial_t u|^2 +
  \int_{Q_t} \xi\, \partial_tu
  +\frac12\|  u(t )\|_V^2 \nonumber \\
  &=\frac12\|  u_0 \|_V^2 
  + \int_{Q_t}   \left(u_1 -  \psi'(u_1) -u_2 +  \psi'(u_2) \right)\partial_t u
  - \int_{Q_t}   g\, \partial_t u
   \label{pier13}
\end{align}
for all $t\in [0,T].$  Hence, in view of \eqref{psi2} and \eqref{hyp-pier}, since the function $r \mapsto r - \psi'(r)$ is Lipschitz continuous in $\big[\, \overline{a}, \overline{b}\, \big]$, by the Young inequality  we infer that  
\beq 
  \int_{Q_t}   \left(u_1 -  \psi'(u_1) -u_2 +  \psi'(u_2) \right)\partial_t u \leq \frac14 \int_{Q_t}|\partial_t u|^2 + c 
  \int_0^t \|  u(s )\|_H^2 ds .
    \label{pier14}
\eeq
On the other hand, we have that 
\beq 
 - \int_{Q_t}   g\, \partial_t u\leq \frac14 \int_{Q_t}|\partial_t u|^2 +   \int_0^t \|  g(s )\|_H^2 ds .
    \label{pier15}
\eeq
Thus, combining \eqref{pier13} with \eqref{pier14}--\eqref{pier15} and observing that 
$$
\int_{Q_t}|\nabla\mu|^2 \ \hbox{ yields a control of } \ \int_0^t \|  \mu(s )\|_{V_0}^2 ds ,
$$
applying the Gronwall lemma leads us to finally obtain \eqref{dipcont}.}


%



\begin{thebibliography}{10}
\bibitem{bai1995viscous}
F.~Bai, C.M.~Elliott, A.~Gardiner, A.~Spence and A.M.~Stuart, 
The viscous {C}ahn-{H}illiard equation. {I}. {C}omputations, 
{\it Nonlinearity\/} {\bf 8} (1995) 131-160.

\bibitem{Barbu}
V. Barbu,
``Nonlinear semigroups and differential equations in Banach spaces'',
Noordhoff, 
Leyden, 
1976.

\bibitem{BCT}
E.~Bonetti, P.~Colli and G.~Tomassetti, A non-smooth 
regularization of a forward-backward parabolic equation,  
{\it Math. Models Methods Appl. Sci.} {\bf 27} (2017) 641-661.

\bibitem{botkinetal2016}
N.D.~Botkin, M.~Brokate and E.G.~El Behi-Gornostaeva, 
One-phase flow in porous media with hysteresis\pcol{, {\it Phys. B}\/} 
{\bf 486} (2016) 183-186.

\bibitem{Brezis}
H. Brezis,
``Op\'erateurs maximaux monotones et semi-groupes de contractions
dans les espaces de Hilbert'',
{\it North-Holland Math. Stud.}
{\bf 5},
North-Holland,
Amsterdam,
1973.

\bibitem{CahH} 
J.W. Cahn and J.E. Hilliard, 
Free energy of a nonuniform system I. Interfacial free energy, 
{\it J. Chem. Phys.} {\bf 2} (1958) 258-267.

\bibitem{CF2} 
P. Colli and T. Fukao, 
Nonlinear diffusion equations as asymptotic 
limits of Cahn--Hilliard systems, 
{\it J. Differential Equations\/} {\bf 260} (2016) 6930-6959.

\bibitem{CGS}
P. Colli, G. Gilardi and J. Sprekels,
On the Cahn--Hilliard equation with dynamic 
boundary conditions and a dominating boundary potential,
{\it J. Math. Anal. Appl.} {\bf 419} (2014) 972-994.

\bibitem{colscar} 
P.\ {C}olli and L. Scarpa, 
\newblock From the viscous Cahn--Hilliard equation to a 
regularized forward-backward parabolic equation,
{\it Asymptot. Anal.} {\bf 99} (2016) 183-205. 

\bibitem{colli-visin} 
P.\ {C}olli and A.\ {V}isintin, 
\newblock On a class of doubly nonlinear evolution equations, 
\newblock {\it Comm.\ Partial Differential Equations\/} {\bf 15} (1990) 737-756.

\bibitem{elliott1996cahn}
C.M. Elliott and H.~Garcke, On the {C}ahn-{H}illiard equation with
  degenerate mobility, {\it SIAM J. Math. Anal.} {\bf 27} (1996) 404-423.

\bibitem{EllSt} 
C.M. Elliott and A.M. Stuart, 
Viscous Cahn--Hilliard equation. II. Analysis, 
{\it J. Differential Equations\/} 
{\bf 128} (1996) 387-414.

\bibitem{EllSh} 
C.M. Elliott and S. Zheng, 
On the Cahn--Hilliard equation, 
{\it Arch. Rational Mech. Anal.} 
{\bf 96} (1986) 339-357.

\ele{\bibitem{fre}
M. Fr\'emond,
\pcol{``Non-Smooth thermomechanics''}, 
Springer-Verlag\pcol{, Berlin,} 2002.}

\bibitem{GiMiSchi} 
G. Gilardi, A. Miranville and G. Schimperna,
On the Cahn--Hilliard equation with irregular potentials and dynamic boundary conditions,
{\it Commun. Pure Appl. Anal.} 
{\bf 8} (2009) 881-912.

\bibitem{Gurtin96} 
M. Gurtin, Generalized Ginzburg-Landau and Cahn-Hilliard 
equations based on a microforce balance,
\pcol{{\it Phys. D}\/} {\bf 92} (1996) 178-192.

\bibitem{Latroce} \pcol{M. Latroche,} Structural and thermodynamic 
properties of metallic hydrides
used for energy storage, {\it J. Phys. Chem. Solids\/}
{\bf 65} (2004) 517-522.

\pieluc{\bibitem{MS} 
A. Miranville and G. Schimperna,
On a doubly nonlinear Cahn-Hilliard-Gurtin system,
{\it Discrete Contin. Dyn. Syst. Ser. B\/} {\bf 14} (2010) 675-697.
%
\bibitem{MZ} A. Miranville and S. Zelik, 
Doubly nonlinear Cahn-Hilliard-Gurtin equations, 
{\it Hokkaido Math. J.} {\bf 38} (2009) 315-360.}

\bibitem{Novic1988viscous}
A.~Novick-Cohen, On the viscous {C}ahn-{H}illiard equation, in
  ``Material instabilities in continuum mechanics ({E}dinburgh, 1985--1986)'',
 {\it Oxford Sci. Publ.}, Oxford Univ. Press, New York, 1988, pp.~329--342.

\bibitem{NovicP1991TAMS}
A.~Novick-Cohen and R.L. Pego, Stable patterns in a viscous
  diffusion equation, {\it Trans. Amer. Math. Soc.} {\bf 324} (1991) 331-351.  

\bibitem{schweizer2012}
B.~Schweizer, The Richards equation with hysteresis and degenerate capillary pressure, 
\pcol{{\it J. Differential Equations}\/} {\bf 252} (2012) 5594-5612.

\bibitem{simon}
J.~Simon.
Compact sets in the space {$L^p(0,T;B)$}.
{\em Ann. Mat. Pura Appl. (4)\/} {\bf 146} (1987), 65-96.

\bibitem{Tomas}
G.~Tomassetti, Smooth and non-smooth regularizations
of the nonlinear diffusion equation,
{\em Discrete Contin. Dyn. Syst. Ser. S\/} {\bf 10} (2017) 1519-1537.
\end{thebibliography}
\end{document}